\documentclass[11pt,a4paper,pdflatex,final]{amsart}
\synctex=1
\pdfoutput=1
\usepackage{amsmath,amsthm,amssymb}
\usepackage{amsfonts}
\usepackage[top=25truemm,bottom=25truemm,left=30truemm,right=30truemm]{geometry}
\usepackage{newtxtext}
\usepackage[varg]{newtxmath}
\usepackage[mathscr]{eucal}
\usepackage{bxpapersize}
\usepackage{breqn}
\usepackage{ascmac}
\usepackage{epic,eepic}
\usepackage{graphicx}
\usepackage{subcaption}
\usepackage{multirow}
\usepackage{mathtools}
\mathtoolsset{showonlyrefs}
\usepackage{indentfirst}
\usepackage{cases}
\usepackage{color}
\usepackage{url}
\usepackage{bm}
\usepackage[all]{xy}
\usepackage[noadjust]{cite}
\usepackage{showkeys}
\usepackage{enumitem}
\usepackage{hyperref}
\hypersetup{
	setpagesize=false,
	bookmarksnumbered=true,%
	bookmarksopen=true,%
	colorlinks=true,%
	linkcolor=blue,
	citecolor=blue,
	urlcolor=magenta,
}

\theoremstyle{plain}

\newtheorem{theorem}{Theorem}[section]
\newtheorem{proposition}[theorem]{Proposition}
\newtheorem{lemma}[theorem]{Lemma}
\newtheorem{corollary}[theorem]{Corollary}
\newtheorem{fact}[theorem]{Fact}
\theoremstyle{definition}
\newtheorem{definition}[theorem]{Definition}
\newtheorem{example}[theorem]{Example}

\theoremstyle{remark}
\newtheorem{rmk}[theorem]{Remark}

\newcommand{\rank}{\operatorname{rank}}

\newcommand{\tr}{\operatorname{tr}}

\newcommand{\hess}{\operatorname{Hess}}

\newcommand{\sgn}{\operatorname{sgn}}

\newcommand{\R}{\mathbb{R}}
\newcommand{\bS}{\mathbb{S}}

\newcommand{\be}{\bm{e}}

\newcommand{\bv}{\bm{v}}
\newcommand{\first}{\operatorname{I}}
\newcommand{\second}{\operatorname{II}}

\newcommand{\mcal}{\mathcal}
\renewcommand{\phi}{\varphi}
\newcommand{\eps}{\varepsilon}
\newcommand{\what}{\widehat}

\newcommand{\til}{\tilde}

\newcommand{\inner}[2]{\left\langle{#1},{#2}\right\rangle}

\numberwithin{equation}{section}

\title[Cuspidal edges on focal surface]
{Cuspidal edges on focal surfaces of regular surfaces}
\author[K. Teramoto]{Keisuke Teramoto}
\address{Department of Mathematics, 
	Yamaguchi University, 1677-1 Yoshida, Yamaguchi 753-8512, Japan}
\email{kteramoto@yamaguchi-u.ac.jp}

\thanks{The author was partially supported by JSPS KAKENHI Grant Numbers JP22K13914 and JP22KK0034.}
\subjclass[2020]{53A05, 53A55, 57R45}
\keywords{parallel surface, focal surface, cuspidal edge}
\date{\today}

\begin{document}
	
	
	\begin{abstract}
	We investigate geometric invariants of cuspidal edges 
	on focal surfaces of regular surface. 
	In particular, we shall clarify the sign of the singular curvature 
	at a cuspidal edge on a focal surface using singularities of 
	parallel surface of a given surface satisfying certain conditions. 
	\end{abstract}
	
	\maketitle
	
\section{Introduction}
Let $f\colon U\to\R^3$ be a $C^\infty$ regular surface, 
where $U$ is an open set in $\R^2$. 
Then for a fixed $t\in\R$, a parallel surface 
$f^t\colon U\to\R^3$ of $f$ is defined by 
\[
f^t(u,v)=f(u,v)+t\nu(u,v)\quad  ((u,v)\in U),
\]
where $\nu$ is the unit normal vector field along $f$ 
given by $\nu=(f_u\times f_v)/|f_u\times f_v|$. 
A parallel surface is a (wave) front in $\R^3$ (\cite{agv,bruce,usy-book}). 
Let $\kappa_i$ ($i=1,2$) be principal curvatures of $f$. 
Then $f^t$ is singular at $p\in U$ when $t=1/\kappa_i(p)$ ($i=1$ or $2$). 
Singularities of parallel surfaces are Legendrian singularities. 
If $p$ is not an umbilic point of $f$, 
that is, $\kappa_1(p)\neq \kappa_2(p)$, 
then $f^t$ may be a cuspidal edge, a swallowtail, 
a cuspidal lips, a cuspidal beaks and a cuspidal butterfly at $p$ 
(see Definition \ref{dfn:singularities}; cf. \cite[Chapters 21 and 22]{agv}). 
Characterizations for these singularities are given by Fukui and Hasegawa 
\cite[Theorems 3.4 and 3.5]{fh-para} 
in terms of geometric properties of the initial surface. 
On the other hand, 
one can define focal surfaces $C_i\colon U\setminus\kappa_i^{-1}(0)\to\R^3$ ($i=1,2$) 
associated with $\kappa_i$ by 
\[
C_i(u,v)=f(u,v)+\frac{1}{\kappa_i(u,v)}\nu(u,v).
\]
The focal surface $C_i$ corresponds to the locus of 
centers of principal curvature spheres with respect to $\kappa_i$. 
The relationship between the Gaussian curvature 
of the focal surface and geometric properties of the initial surface 
is well understood (see \cite[page 183]{eisen}). 
Moreover, it is known that focal surfaces have Lagrangian singularities (\cite[Chapter 21]{agv}). 
Furthermore, it can be seen that the singularities of parallel surfaces 
are located on a focal surface. 
The relationships between several singularities of parallel surfaces 
and of focal surfaces are shown in Table \ref{tab:rel} 
(see Proposition \ref{prop:sing-focal}).		

\begin{table}[htbp]
	\caption{Relation between singularities.}
	\label{tab:rel}
				\begin{tabular}{|c||c|}\hline
						singularity of parallel surface& singularity of focal surface\\ \hline\hline
						 cuspidal edge & regular \\ \hline
						 swallowtail & {cuspidal edge}\\ \hline
					 cuspidal lips & cuspidal edge\\\hline
					cuspidal beaks & cuspidal edge\\ \hline
					\end{tabular}
			\end{table}

On the other hand, there are various studies on 
surface with cuspidal edge singularities 
from differential geometric viewpoint 
(cf. \cite{fukui,msuy,ms,tito,oset-tari,suy-front,tera0,tera1,tera3}). 
At a cuspidal edge, several geometric invariants are introduced (\cite{msuy,ms,suy-front}). 
The limiting normal curvature $\kappa_\nu$ and singular curvature $\kappa_s$ 
are known as principal geometric invariants at a cuspidal edge. 
The limiting normal curvature is related to the behavior 
of the Gaussian curvature near a cuspidal edge (\cite[Theorem A]{msuy}; see Figure \ref{fig:kn}). 
(We remark that the limiting normal curvature 
can be defied at a rank one singular point of a front (\cite[pages 253--254]{msuy}).) 
\begin{figure}[h]
	\centering
	\begin{minipage}[b]{0.3\hsize}
		\centering
		\includegraphics[width=0.7\linewidth]{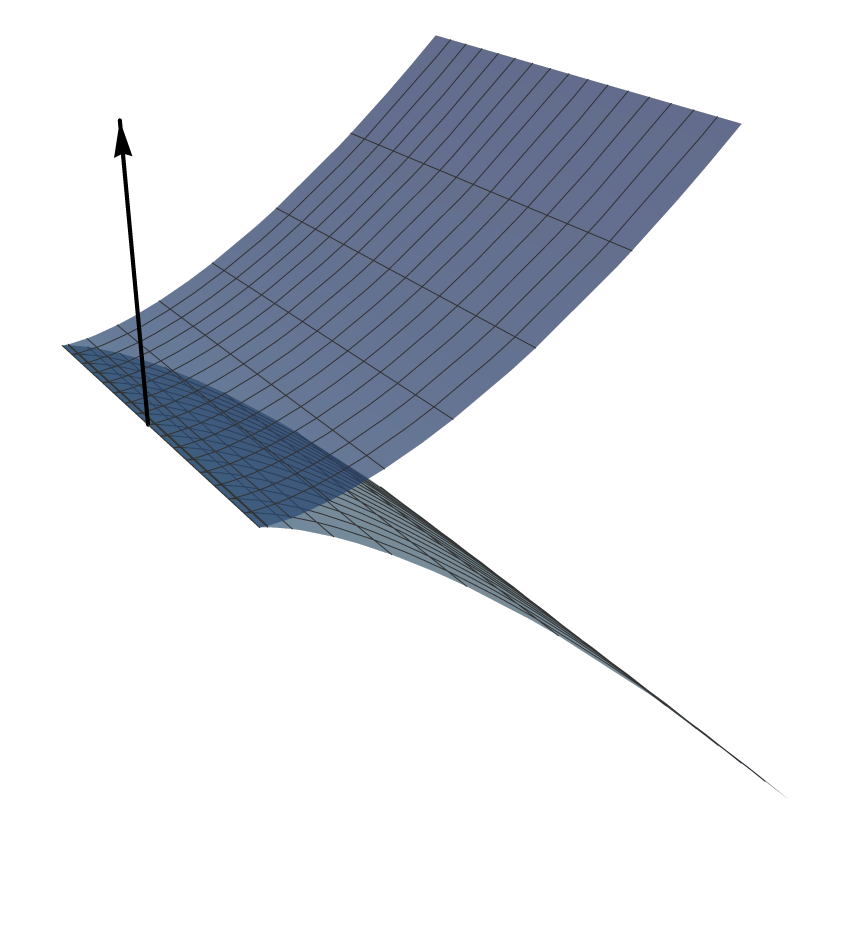}
		\subcaption{$\kappa_\nu=0$}
	\end{minipage}
	\begin{minipage}[b]{0.3\hsize}
		\centering
		\includegraphics[width=0.6\linewidth]{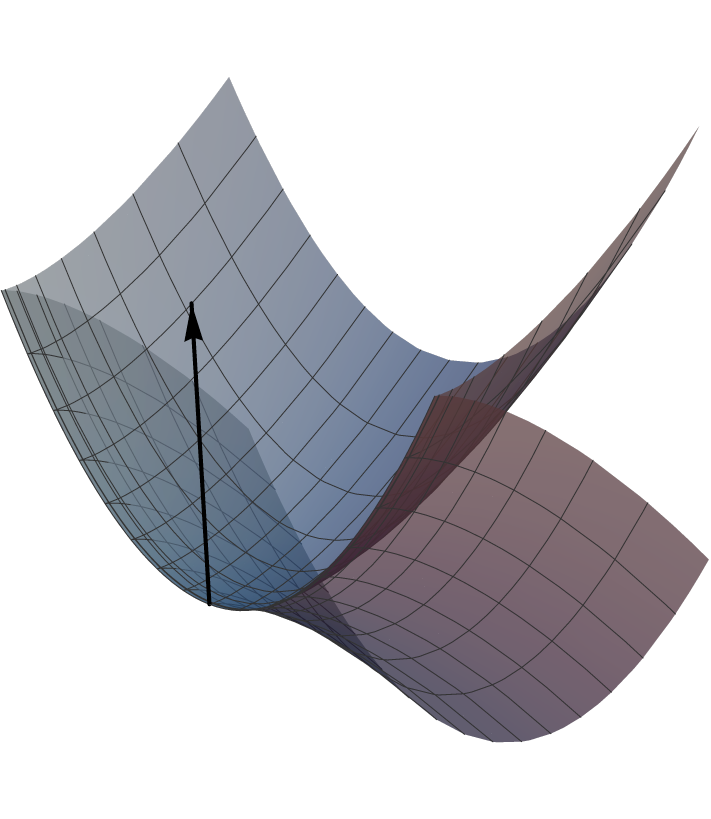}
		\subcaption{$\kappa_\nu\neq0$}
	\end{minipage}
	\caption{Cuspidal edges with 
		vanishing $\kappa_\nu$ (left) and 
		non-vanishing $\kappa_\nu$ (right). 
		Arrows indicate unit normal vectors.}
	\label{fig:kn}
\end{figure}
Conversely, the sign of the singular curvature reflects the concavity or convexity of the cuspidal edge 
(see Figure \ref{fig:ce}; \cite[Corollary 1.18]{suy-front}).
\begin{figure}[h]
	\centering
	\begin{minipage}[b]{0.3\hsize}
		\centering
		\includegraphics[width=0.65\linewidth]{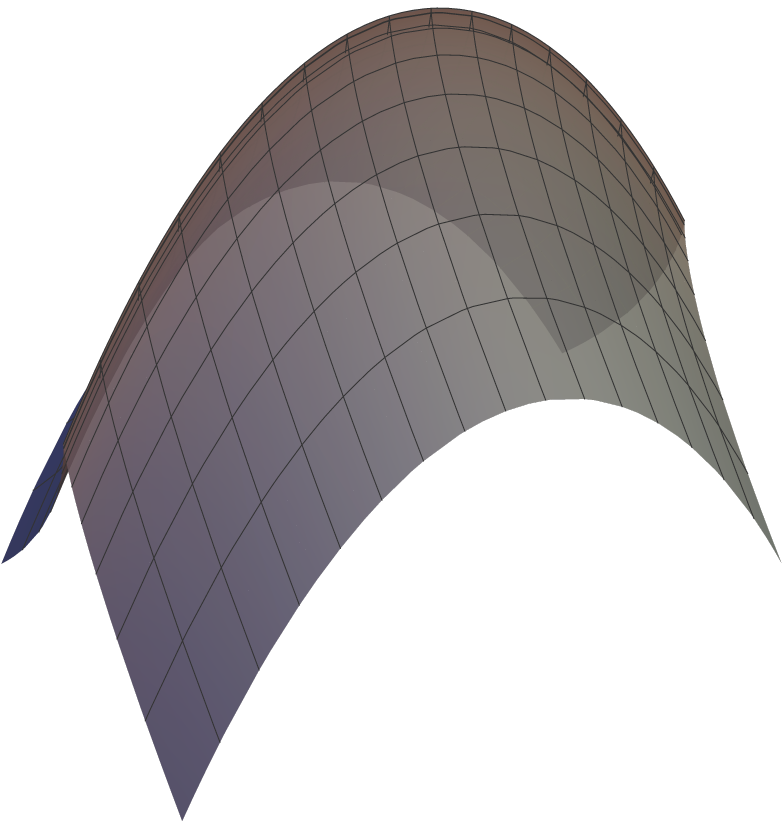}
		\subcaption{$\kappa_s>0$}
	\end{minipage}
	\begin{minipage}[b]{0.3\hsize}
		\centering
		\includegraphics[width=0.65\linewidth]{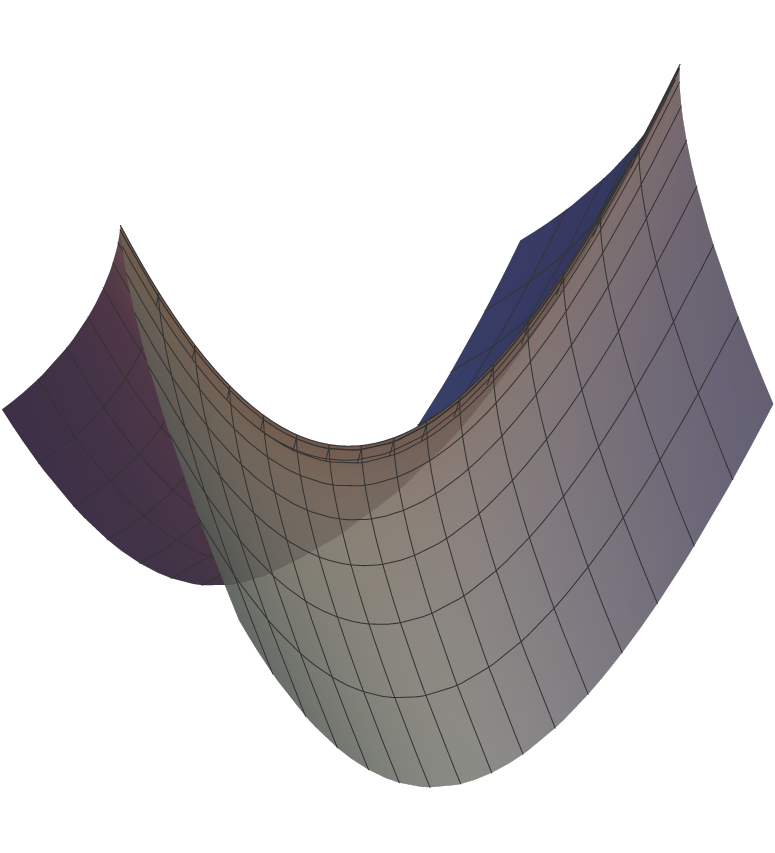}
		\subcaption{$\kappa_s<0$}
	\end{minipage}
	\caption{Cuspidal edges with 
		positive $\kappa_s$ (left) and 
		negative $\kappa_s$ (right).}
	\label{fig:ce}
\end{figure}
Therefore examining the singular curvature 
at a cuspidal edge on focal surfaces 
determines its concavity or convexity. 
Consequently, by investigating the properties of the initial surface or parallel surface 
that determines the sign of the singular curvature at cuspidal edges appearing on the focal surface, 
it is expected that the geometric properties of the focal surface will become clearer.

In this paper, we study geometric properties of parallel surfaces 
and focal surfaces at rank one singular points. 
In particular, we focus on relationships 
between geometric perspective and 
behavior of the limiting normal and singular curvatures. 
In Section \ref{sec:parallel}, 
we investigate parallel surfaces. 
When the initial surface does not have umbilic points, 
its parallel surfaces may have rank one singularities. 
Thus one can consider the limiting normal curvature at that point 
on a parallel surface. 
In particular, the following results hold.
\begin{proposition}\label{prop:kn-para}
	Let $f\colon U\to\R^3$ be a regular surface without umbilic point 
	and $\nu$ its unit normal vector. 
	Suppose that the principal curvature $\kappa_1$ \textup{(}resp. $\kappa_2$\textup{)}
	does not vanish on $U$. 
	{Then for the parallel surface $f^{t}$ with $t=1/\kappa_1(p)$ \textup{(}resp. $t=1/\kappa_2(p)$\textup{)},} 
	the limiting normal curvature 
	$\kappa_\nu^{t}$ of $f^t$ satisfies 
	\[
	\kappa_\nu^t=\frac{\kappa_1\kappa_2}{\kappa_1-\kappa_2}\quad 
	\left(\textit{resp.}~\kappa_\nu^t=\frac{\kappa_1\kappa_2}{\kappa_2-\kappa_1}\right)
	\]
	at $p$. 
\end{proposition}
The reciprocal $1/\kappa_\nu^t$ represents the (signed) distance between the two focal points 
(centers of two principal curvatures) at $p$. 
Indeed, we see that 
\[
\frac{1}{\kappa_\nu^t}=
\begin{dcases}
	\frac{1}{\kappa_2}-\frac{1}{\kappa_1} & (t=1/\kappa_1(p)),\\
	\frac{1}{\kappa_1}-\frac{1}{\kappa_2} & (t=1/\kappa_2(p))
\end{dcases}
\]
at $p$ when $\kappa_1(p)\kappa_2(p)\neq0$. 
On the other hand, it follows that 
\[
\kappa_\nu^t=
\begin{dcases}
	\kappa_2^t & (t=1/\kappa_1(p)),\\
	\kappa_1^t & (t=1/\kappa_2(p))
\end{dcases}
\]
hold at $p$, where $\kappa_i^t$ ($i=1,2$) are principal curvatures of $f^t$ (cf. \cite{tera1}). 
In addition, we examine the singular curvature of a cuspidal edge on $f^t$ 
(Proposition \ref{prop:ks-para}). 

In Section \ref{sec:focal}, 
we deal with focal surfaces. 
Especially, we study the limiting normal curvature 
and the singular curvature at a cuspidal edge on the focal surface. 
The curvature line coordinate system is employed to provide explicit representations 
of these curvatures. 
As a consequence, we establish a geometric criterion that determines the sign of the singular curvature of the focal surface in terms of the type of singularity of the parallel surface. 
More precisely, we shall show the following.
\begin{theorem}\label{thm:ks-c1}
	Let $f\colon U\to\R^3$ be a regular surface 
	whose parallel surface $f^{t}$ is a cuspidal lips 
	\textup{(}resp. a cuspidal beaks\textup{)} at $p\in U$, 
	where $t=1/\kappa_i(p)$ \textup{(}$i=1$ or $2$\textup{)}. 
	Suppose that the limiting normal curvature $\kappa_\nu^t$ of $f^t$ 
	vanishes at $p$.  
	Then the singular curvature $\kappa_s^{C_i}$ at a cuspidal edge $p$ of $C_i$ 
	is positive \textup{(}resp. negative\textup{)}. 
\end{theorem}
We note that if the assumption {$\kappa_\nu^t(p)=0$} is removed, 
the conclusion of Theorem \ref{thm:ks-c1} does not hold in general 
(Example \ref{ex:counter}).

\subsection*{Acknowledgments} 
The author thanks Kentaro Saji and Runa Shimada 
for fruitful discussions and valuable comments. 
The author is also grateful to the referees for 
careful reading and constructive comments, which improved the manuscript. 

\section{Preliminaries}
\subsection{Surfaces in Euclidean 3-space}
We here review some fundamental concepts from 
differential geometry of immersed surfaces 
in Euclidean $3$-space $\R^3$. 
For details, see \cite{carmo,hopf,porteous2}. 
{Let $U\subset\R^2$ be a domain. 
Let $f\colon U\to\R^3$ be an immersion 
and $\nu$ its unit normal vector field defined by 
\[
\nu(u,v)=\frac{f_u\times f_v}{|f_u\times f_v|}(u,v),
\] 
where $\times$ is the cross product of $\R^3$, 
$(\cdot)_u=\partial (\cdot)/\partial u$ and $(\cdot)_v=\partial (\cdot)/\partial v$. 
Then we call the image of $f$ or $f$ itself a (\textit{regular}) \textit{surface}. 
For a surface $f\colon U\to\R$, we define 
\begin{equation}\label{eq:fundamentals}
	\begin{aligned}
		E&=\inner{f_u}{f_u}, & F&=\inner{f_u}{f_v},& G&=\inner{f_v}{f_v},\\
		L&=-\inner{f_{u}}{\nu_u}, & M&=-\inner{f_u}{\nu_v}=-\inner{f_v}{\nu_u}, & 
		N&=-\inner{f_v}{\nu_v}, 
	\end{aligned}
\end{equation}
where $\inner{\cdot}{\cdot}$ is the Euclidean inner product. 
The functions $E,F,G$ are called the \textit{coefficients of the first fundamental form} of $f$, 
and $L,M,N$ are the \textit{coefficients of the second fundamental form} of $f$. 
Note that $EG-F^2>0$ holds on $U$. 

Let $\what{\first}$ and $\what{\second}$ be matrices given by 
\[
\what{\first}=\begin{pmatrix}
	E & F \\ F & G
\end{pmatrix},\quad 
\what{\second}=\begin{pmatrix}
	L & M \\ M & N
\end{pmatrix}.
\]
Then $\what{\first}$ is invertible. 
We define the Weingarten matrix $W$ as 
\[
W=\what{\first}^{-1}\what{\second}
=\frac{1}{EG-F^2}\begin{pmatrix}
	GL-FM & GM-FN \\ EM-FL & EN-FM
\end{pmatrix}.
\]
The \textit{Gaussian curvature} $K$ 
and \textit{mean curvature} $H$ of $f$ are defined by 
\[
K=\det{W},\quad H=\frac{1}{2}\tr W.
\]
Moreover, the eigenvalues $\kappa_1,\kappa_2$ of $W$ are called the \textit{principal curvatures} of $f$. 
A point $p\in U$ is called an \textit{umbilic point} of $f$ 
if $\kappa_1(p)=\kappa_2(p)$. 

Let $f\colon U\to\R^3$ be a surface and $\nu$ its unit normal vector field of $f$. 
Assume that $p\in U$ is not an umbilic point of $f$. 
Then there exist a sufficiently small neighborhood $\til{U}\subset U$ of $p$ 
and local coordinate system $(u,v)$ such that 
$F=M=0$ on $\til{U}$ (cf. \cite[page 188]{carmo}). 
We call such a coordinate system $(u,v)$ a \textit{curvature line coordinate system} on $\til{U}$. 

In the following, we assume that a surface $f\colon U\to\R^3$ has no umbilic point 
and parametrized by a curvature line coordinate system $(u,v)$. 
We define $\be_1,\be_2\colon U\to\R^3$ by 
\begin{equation}\label{eq:frame}
	\be_1=\frac{f_u}{|f_u|},\quad 
	\be_2=\frac{f_v}{|f_v|}.
\end{equation}
Then replacing $\nu$ with $-\nu$ if necessary 
it holds that 
\[
\inner{\be_i}{\be_j}=\delta_{ij},\quad 
\be_1\times \be_2=\nu,
\]
where $\delta_{ij}$ is the Kronecker delta. 
The Gauss and Weingarten formulas are as follows: 
\begin{equation}\label{eq:gauss}
	\begin{aligned}
		f_{uu}&=\Gamma^{1}_{11}f_u+\Gamma^{2}_{11}f_v+L\nu
		=\frac{E_u}{2E}f_u - \frac{E_v}{2G}f_v+L\nu,\\
		f_{uv}&=f_{vu}=\Gamma^{1}_{12}f_u+\Gamma^{2}_{12}f_v
		=\frac{E_v}{2E}f_u+\frac{G_u}{2G}f_v,\\
		f_{vv}&=\Gamma^{1}_{22}f_u+\Gamma^{2}_{22}f_v+N\nu
		=-\frac{G_u}{2E}f_u+\frac{G_v}{2G}f_v+N\nu,\\
		\nu_u&=-\kappa_1f_u,\quad 
		\nu_v=-\kappa_2f_v,
	\end{aligned}
\end{equation}
where $\kappa_1=L/E$ and $\kappa_2=N/G$. 
}
Moreover, Codazzi equations can be written as  
\begin{equation}\label{eq:codazzi}
L_v=\frac{1}{2}(\kappa_1+\kappa_2)E_v,\quad 
N_u=\frac{1}{2}(\kappa_1+\kappa_2)G_u.
\end{equation}

\begin{lemma}\label{lem:d-kappa}
Under the above setting, we have 
\[
\frac{E_v}{2E}=\Gamma^{1}_{12}=\frac{(\kappa_1)_{v}}{\kappa_2-\kappa_1},\quad 
\frac{G_u}{2G}=\Gamma^{2}_{12}=\frac{(\kappa_2)_{u}}{\kappa_1-\kappa_2}.
\]
\end{lemma}
\begin{proof}
We consider $\kappa_1$. 
By the definition, we have 
\[
(\kappa_1)_{v}
=\frac{L_v}{E}-\frac{LE_v}{E^2}
=\frac{L_v}{E}-\kappa_1\frac{E_v}{E}.
\]
By Codazzi equations \eqref{eq:codazzi}, it holds that 
\[
(\kappa_1)_{v}
=\frac{E_v}{2E}(\kappa_1+\kappa_2)-\kappa_1\frac{E_v}{E}
=\frac{E_v}{2E}(\kappa_2-\kappa_1).
\]
Thus we obtain the assertion for $\kappa_1$. 
For $\kappa_2$, one can show in a similar way. 
\end{proof}
By \eqref{eq:gauss} and Lemma \ref{lem:d-kappa}, 
it holds that 
\[
f_{uv}=-\frac{(\kappa_1)_{v}}{\kappa_1-\kappa_2}f_u+\frac{(\kappa_2)_{u}}{\kappa_1-\kappa_2}f_v. 
\] 

\subsection{Fronts}
To investigate parallel surfaces below, 
we here recall the class of surfaces with certain singularities 
called fronts (\cite{agv,usy-book}). 
Let $f\colon U\to\R^3$ be a $C^\infty$ map, 
where $U\subset \R^2$ is an open set. 
Then $f$ is said to be a \textit{front} 
if there exists a $C^\infty$ map $\nu\colon U\to\bS^2$ 
such that 
\begin{enumerate}
\item $\inner{df_q(\bv)}{\nu(q)}=0$ for any $q\in U$ and $\bv\in T_q\R^2$, and 
\item the pair $(f,\nu)\colon U\to\R^3\times \bS^2$ gives an immersion.
\end{enumerate}
We call the map $\nu$ a \textit{unit normal vector field} of $f$.
By definition, an immersion $f\colon U\to\R^3$ is a front. 

Let $f\colon U\to\R^3$ be a front with a unit normal vector field $\nu$. 
Then a point $p\in U$ is said to be a \textit{singular point} of $f$ 
if $f$ is not an immersion at $p$, that is, $\rank df_p<2$. 
{We denote the set of singular points of $f$ by $S(f)$.} 
We define a function $\lambda\colon U\to\R$ by 
\begin{equation}\label{eq:lambda}
\lambda(u,v)=\det(f_u,f_v,\nu)(u,v).
\end{equation}
We then call $\lambda$ the \textit{signed area density function} of $f$. 
It is clear that $S(f)=\lambda^{-1}(0)$ holds. 
Moreover, a function $\til{\lambda}$ which is non-zero proportional to $\lambda$ 
is called an \textit{identifier of singularities} of $f$. 
A singular point $p\in S(f)$ of $f$ is said to be of \textit{non-degenerate} 
if $(\tilde{\lambda}_u,\tilde{\lambda}_v)(p)\neq(0,0)$. 
Otherwise, it is said to be of \textit{degenerate}. 
For a non-degenerate singular point $p\in S(f)$, 
there exist a neighborhood $V(\subset U)$ of $p$ 
and a regular curve $\gamma\colon(-\eps,\eps)\to V$ with $\gamma(0)=p$ 
such that $\til{\lambda}(\gamma(t))=0$ for $t\in(-\eps,\eps)$. 
This curve $\gamma$ is called a \textit{singular curve} for $f$ 
passing through $p=\gamma(0)$. 
On the other hand, for a singular point $p\in S(f)$ 
satisfying $\rank df_p=1$, there exists 
a non-zero vector field $\eta$ on $V(\subset U)$ 
such that $\eta_p$ generates the kernel of $df_p$. 
This vector field $\eta$ is called a \textit{null vector field} for $f$. 

\begin{definition}
Let $f_i\colon(U_i,p_i)\to\R^3$ ($i=1,2$) be $C^\infty$ map germs. 
Then $f_1$ and $f_2$ are \textit{$\mcal{A}$-equivalent} 
if there exist diffeomorphism germs $\phi\colon(U_1,p_1)\to(U_2,p_2)$ 
and $\Phi\colon(\R^3,f_1(p_1))\to(\R^3,f_2(p_2))$ such that 
\(
\Phi\circ f_1=f_2\circ \phi
\).
\end{definition}
\begin{definition}\label{dfn:singularities}
Let $f\colon U\to\R^3$ be a front. 
Then 
\begin{enumerate}[label={\rm(\arabic*)}]
	\item $f$ is a \textit{cuspidal edge} at $p(\in U)$ 
	if $f$ is $\mcal{A}$-equivalent to the germ $(u,v)\mapsto(u,v^2,v^3)$ at the origin;
	\item $f$ is a \textit{swallowtail} at $p$ 
	if $f$ is $\mcal{A}$-equivalent to the germ 
	$(u,v)\mapsto(u,4v^3+2uv,3v^4+uv^2)$ at the origin;
	\item $f$ is a \textit{cuspidal butterfly} at $p$ 
	if $f$ is $\mcal{A}$-equivalent to the germ 
	$(u,v)\mapsto(u,5v^4+2uv,4v^5+uv^2)$ at the origin;
	\item $f$ is a \textit{cuspidal lips} at $p$ 
	if $f$ is $\mcal{A}$-equivalent to the germ 
	$(u,v)\mapsto(u,2v^3+u^2v,3v^4+u^2v^2)$ at the origin;
	\item $f$ is a \textit{cuspidal beaks} at $p$ 
	if $f$ is $\mcal{A}$-equivalent to the germ 
	$(u,v)\mapsto(u,2v^3-u^2v,3v^4-u^2v^2)$ at the origin.
\end{enumerate}
\end{definition}
These are rank one singularities of fronts. 
For these singularities, the following criteria are known 
(see \cite{krsuy,is-mandala,ist-horo,suy-ak,usy-book}). 
\begin{fact}\label{fact:crit}
Let $f\colon U\to\R^3$ be a front and $p\in U$ a rank one singular point of $f$. 
Then 
\begin{enumerate}[label={\rm(\arabic*)}]
	\item $f$ is a cuspidal edge at $p$ 
	if and only if $\eta\til{\lambda}(p)\neq0;$
	\item $f$ is a swallowtail at $p$ 
	if and only if $d\til{\lambda}(p)\neq0$, 
	$\eta\til{\lambda}(p)=0$ and $\eta^2\til{\lambda}(p)\neq0;$
	\item $f$ is a cuspidal butterfly at $p$ 
	if and only if $d\til{\lambda}(p)\neq0$, 
	$\eta\til{\lambda}(p)=\eta^2\til{\lambda}(p)=0$ 
	and $\eta^3\til{\lambda}(p)\neq0;$
	\item $f$ is a cuspidal lips at $p$ 
	if and only if $d\til{\lambda}(p)=0$ and $\det\hess(\til{\lambda})(p)>0;$
	\item $f$ is a cuspidal beaks at $p$ 
	if and only if $d\til{\lambda}(p)=0$, $\eta^2\til{\lambda}(p)\neq0$ and 
	$\det\hess(\til{\lambda})(p)<0$.
\end{enumerate}
Here $\til{\lambda}$ is an identifier of singularities of $f$, 
$\eta$ a null vector field, $\eta^k\til{\lambda}$ is 
the $k$-th order directional derivative of $\til{\lambda}$ 
by $\eta$, and $\det\hess(\til{\lambda})(p)$ is the Hessian of $\til{\lambda}$ at $p$. 
\end{fact}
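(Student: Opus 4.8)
The five criteria are classical; they are exactly the recognition results of \cite{krsuy,is-mandala,ist-horo,suy-ak}, collected in \cite{usy-book}, so the plan is to indicate the uniform argument behind them rather than to redo every coordinate change. First I would reduce to a convenient representative and adapted coordinates. Replacing the identifier $\til\lambda$ by $\mu\til\lambda$ with $\mu(p)\neq0$, or rescaling the null vector field $\eta$ and adding to it a vector tangent to $S(f)$, multiplies each of $\eta\til\lambda(p)$, $d\til\lambda(p)$, $\hess(\til\lambda)(p)$, $\eta^{k}\til\lambda(p)$ by a nonzero factor or changes it by terms that vanish whenever the lower-order data do; hence all the conditions in the statement are well defined, and it suffices to work with $\til\lambda=\lambda=\det(f_u,f_v,\nu)$ and with coordinates $(u,v)$ centred at $p$ for which $f_u(p)\neq\0$, $f_v(p)=\0$ and $\eta=\dd_v$. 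Since $f$ is a front, $d(f,\nu)_p$ is injective while $df_p$ has rank $1$ spanned by $f_u(p)$, which forces $\nu_v(p)\neq\0$; and differentiating $\inner{f_v}{\nu}\equiv0$ at $p$ gives $\inner{f_{vv}}{\nu}(p)=0$, so $f_{vv}(p)$ lies in the plane $\nu(p)^{\perp}$, which is spanned by $f_u(p)$ and a unit vector $\be_2(p)$ orthogonal to both $f_u(p)$ and $\nu(p)$.

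For the cuspidal-edge case, a direct computation gives $\eta\lambda(p)=\lambda_v(p)=\det(f_u,f_{vv},\nu)(p)$ (the other two terms in $\lambda_v$ carry a factor $f_v$ and vanish at $p$), which by the previous paragraph is nonzero exactly when $f_u(p)$ and $f_{vv}(p)$ are linearly independent. Assuming this, I would Taylor-expand $f(u,v)=f(u,0)+\tfrac12 v^{2}f_{vv}(u,0)+\tfrac16 v^{3}f_{vvv}(u,0)+O(v^{4})$, using $f_v(u,0)\equiv\0$ along the singular curve, and then carry out the standard completion-of-the-cusp coordinate change in the source together with a rotation and reparametrisation in the target (as in \cite[\S1]{krsuy}): the $u$-curve is regular, $f_{vv}(u,0)$ remains independent from $f_u(u,0)$ near $p$, and frontness guarantees that after these changes the surviving $v^{3}$-term does not vanish, yielding the germ $(u,v)\mapsto(u,v^{2},v^{3})$. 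The converse is a one-line check on the model.

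When $\eta\til\lambda(p)=0$ but $p$ is still non-degenerate, $S(f)$ is a regular curve $\gamma$ through $p$ and the vanishing of $\eta\til\lambda(p)$ means $\eta_p$ is tangent to $\gamma$. Then $t\mapsto\eta\til\lambda(\gamma(t))$ has a critical point at $t=0$, and $\eta^{2}\til\lambda(p)\neq0$ (resp. $\eta^{2}\til\lambda(p)=0$, $\eta^{3}\til\lambda(p)\neq0$) says this critical point is of type $A_1$ (resp. $A_2$); inserting this into the normal-form reduction for fronts along a non-degenerate singular curve (\cite{krsuy,suy-ak}) produces the swallowtail (resp. the cuspidal butterfly), and the converse is again a jet computation on the model germ.

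The cuspidal lips and beaks correspond to the degenerate case $d\til\lambda(p)=0$. Then $\eta\til\lambda(p)=0$ automatically and, differentiating, $\eta^{2}\til\lambda(p)=\hess(\til\lambda)(p)(\eta_p,\eta_p)$, so the local shape of $S(f)=\til\lambda^{-1}(0)$ is governed by $\hess(\til\lambda)(p)$. If $\det\hess(\til\lambda)(p)>0$ the Hessian is definite, $S(f)=\{p\}$ locally, the condition $\eta^{2}\til\lambda(p)\neq0$ is automatic, and the reduction gives the cuspidal lips; if $\det\hess(\til\lambda)(p)<0$ the Hessian is indefinite, $S(f)$ is a transverse pair of curves through $p$, and the extra hypothesis $\eta^{2}\til\lambda(p)\neq0$ says $\eta_p$ avoids the null cone of the Hessian, i.e.\ is transverse to both branches of $S(f)$, which is precisely the genericity needed for the reduction of \cite{is-mandala,ist-horo} to yield the cuspidal beaks. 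I expect the real work to be the sufficiency direction in each case, namely carrying out (or citing in full detail) the coordinate changes that realise the model germ and checking that the recognition data are independent of the choices of $\til\lambda$ and $\eta$; the necessity direction is in every case a routine computation on the explicit germ.
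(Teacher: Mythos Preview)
The paper does not prove this statement at all: it is recorded as a \emph{Fact} with references to \cite{krsuy,is-mandala,ist-horo,suy-ak,usy-book} and is used as a black box thereafter. Your proposal goes further than the paper by sketching the actual mechanism behind the criteria, and as an outline it is sound: the well-definedness of the conditions under change of $\til\lambda$ and $\eta$, the reduction to adapted coordinates with $\eta=\dd_v$, the identification of $\eta\lambda(p)$ with $\det(f_u,f_{vv},\nu)(p)$, the role of frontness in forcing the cubic term for the cuspidal edge, and the Hessian dichotomy for lips versus beaks are all correct and are precisely the ingredients one finds in the cited sources.

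Two small points of imprecision are worth flagging if you intend to expand this into a self-contained proof. First, in the swallowtail/butterfly paragraph your phrasing ``$t\mapsto\eta\til\lambda(\gamma(t))$ has a critical point at $t=0$'' is not quite right: what you have is that this function \emph{vanishes} at $t=0$, and the conditions $\eta^2\til\lambda(p)\neq0$ or $\eta^2\til\lambda(p)=0,\ \eta^3\til\lambda(p)\neq0$ control the order of that zero (these directional derivatives are not literally the $t$-derivatives of $\eta\til\lambda\circ\gamma$ unless $\eta$ is tangent to $\gamma$, which it is only at $p$). Second, in the cuspidal-edge paragraph you write ``using $f_v(u,0)\equiv\0$ along the singular curve'', which presupposes that the singular curve has already been straightened to $\{v=0\}$ while keeping $\eta=\dd_v$; this is possible exactly because $\eta\til\lambda(p)\neq0$ makes $\eta$ transverse to $S(f)$, but it deserves a sentence. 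Neither issue affects the correctness of the outline, and since the paper itself simply cites the result, your write-up already exceeds what is required here.
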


\subsubsection{Limiting normal curvature and singular curvature} 
Let $f\colon U\to\R^3$ be a front and $\nu$ its unit normal vector {field}. 
Assume that $f$ is a cuspidal edge at $p$. 
Then one can find a singular curve $\gamma(t)$ passing through $p=\gamma(0)$ 
and a null vector field $\eta$ around $p$. 
We set $\hat{\gamma}=f\circ \gamma$. 
Then along $\gamma$, we can define the following geometric invariants: 
\begin{equation}\label{eq:invariants}
\begin{aligned}
	\kappa_\nu(t)&=\frac{\inner{\hat{\gamma}''(t)}{\nu(\gamma(t))}}{|\hat{\gamma}'(t)|^2}
	=-\frac{\inner{\hat{\gamma}'(t)}{\nu'(\gamma(t))}}{|\hat{\gamma}'(t)|^2}\quad 
	{\left('=\frac{d}{dt}\right)},\\ 
	\kappa_s(t)&=\sgn(\eta\lambda\cdot\det(\gamma',\eta))
	\frac{\det(\hat{\gamma}'(t),\hat{\gamma}''(t),\nu(\gamma(t)))}{|\hat{\gamma}'(t)|^3},
\end{aligned}
\end{equation}
where $\lambda$ is the signed area density function. 
We call $\kappa_\nu$ and $\kappa_s$ the \textit{limiting normal curvature} 
and the \textit{singular curvature}, respectively (see \cite{ms,msuy,suy-front,usy-book}). 


{
Let $f\colon U\to\R^3$ be a front. 
Let $p\in U$ be a rank one singular point of $f$. 
Assume that $p$ is a non-degenerate singular point other than a cuspidal edge. 
Then the limiting normal curvature $\kappa_\nu$ of $f$ at $p$ 
is given by 
\begin{equation}\label{eq:kn-nondeg}
	\kappa_\nu(p)=\lim_{t\to0}\frac{\inner{\hat{\gamma}''(t)}{\nu(\gamma(t))}}{|\hat{\gamma}'(t)|^2}
	=-\lim_{t\to0}\frac{\inner{\hat{\gamma}'(t)}{\hat{\nu}'(t)}}{|\hat{\gamma}'(t)|^2},
\end{equation}
where $\hat{\gamma}(t)=f(\gamma(t))$, $\hat{\nu}(t)=\nu(\gamma(t))$ 
and $\gamma(t)$ is the singular curve of $f$ through $p=\gamma(0)$ 
(\cite[Proposition 2.9]{msuy}). 
On the other hand, assume that $p$ is a degenerate singular point. 
Take a local coordinate system $(u,v)$ satisfying $f_v(p)=0$. 
Then the limiting normal curvature $\kappa_\nu(p)$ of $f$ at $p$ is defined by 
\begin{equation}\label{eq:kn-deg}
	\kappa_\nu(p)=\frac{\inner{f_{uu}(p)}{\nu(p)}}{|f_u(p)|^2}
\end{equation}
(\cite[(2.2)]{msuy}). 
In this case, $\kappa_\nu(p)$ does not depend on the choice 
of local coordinates satisfying $f_v(p)=0$ (\cite[Proposition 2.9]{msuy}). 
}

\section{Parallel surfaces}\label{sec:parallel}
We consider the parallel surfaces of a regular surface $f\colon U\to\R^3$ 
without umbilic point. 
For a fixed $t\in\R\setminus\{0\}$, 
a \textit{parallel surface} $f^t$ of $f$ at distance $t$ 
is defined as 
\begin{equation}\label{eq:parallel}
f^t(u,v)\coloneqq f(u,v)+t\nu(u,v),
\end{equation}
where $\nu$ is a unit normal vector to $f$. 
Let us take curvature line coordinates $(u,v)$. 
Then we see that 
\begin{equation}\label{eq:d-ft}
f^t_u=(1-t\kappa_1)f_u,\quad 
f^t_v=(1-t\kappa_2)f_v
\end{equation}
by Weingarten formulas {\eqref{eq:gauss}}.  
Thus $\nu$ can be also a unit normal vector to $f^t$. 
Moreover, $f^t$ is a front for any $t\in\R\setminus\{0\}$ (\cite{usy-book}). 
%

For each fixed $t\in\R\setminus\{0\}$, 
the set of singular points $S(f^t)$ of $f^t$ is 
\[
S(f^t)=\bigcup_{i=1}^2\{(u,v)\in U\mid \kappa_i(u,v)=1/t\}.
\]
Since $f$ has no umbilic points, 
\[
\bigcap_{i=1}^2\{(u,v)\in U\mid \kappa_i(u,v)=1/t\}=\emptyset.
\]
If we take $t=1/\kappa_1(p)$, 
$S(f^t)$ coincides with $\kappa_1^{-1}(\kappa_1(p))$. 
This implies that the \textit{constant principal curvature lines} form 
the set of singular points of a parallel surface (see \cite{fh-para}). 

\subsection{Singularities of parallel surfaces}
In the following of this section, we fix $t=1/\kappa_1(p)$. 
Then we see that 
$f^t_u(p)=0$ and $f^t_v(p)\neq0$ by \eqref{eq:d-ft}. 
Thus a point $p$ is a corank one singular point of $f^t$. 
Moreover, we set 
\begin{equation}\label{eq:cpc}
\til{\lambda}(u,v)=\kappa_1(u,v)-\kappa_1(p).
\end{equation}
Then $\til{\lambda}^{-1}(0)=S(f^t)$. 
Thus $\til{\lambda}$ is an identifier of singularities of $f^t$. 
In addition, $\partial_u$ give a null vector field for $f^t$.
By Fact \ref{fact:crit}, we have the following immediately. 
\begin{proposition}[{\cite[Theorems 3.4 and 3.5]{fh-para}}]\label{prop:lips/beaks}
Let $f\colon U\to\R^3$ be a regular surface without umbilic point 
and $f^t$ the parallel surface of $f$ at distance $t=1/\kappa_1(p)$. 
Take a curvature line coordinate system $(u,v)$ on $U$ for $f$. 
\begin{enumerate}[label={\rm(\arabic*)}]
	\item $f^t$ is a cuspidal edge at $p$ 
	if and only if $(\kappa_1)_u\neq0$ at $p$.
	\item $f^t$ is a swallowtail at $p$ 
	if and only if $(\kappa_1)_u=0$, $(\kappa_1)_v\neq0$ and $(\kappa_1)_{uu}\neq0$ at $p$.
	\item $f^t$ is a cuspidal butterfly at $p$ 
	if and only if 
	$(\kappa_1)_u=(\kappa_1)_{uu}=0$, $(\kappa_1)_v\neq0$ and $(\kappa_1)_{uuu}\neq0$ at $p$.
	\item $f^t$ is a cuspidal lips at $p$ 
	if and only if $(\kappa_1)_u=(\kappa_1)_v=0$ and 
	$\det\hess(\kappa_1)>0$ at $p$.
	\item $f^t$ is a cuspidal beaks at $p$ 
	if and only if $(\kappa_1)_u=(\kappa_1)_v=0$, 
	$(\kappa_1)_{uu}\neq0$ and
	$\det\hess(\kappa_1)<0$ at $p$.
\end{enumerate} 
\end{proposition}
{ 
We remark that if $f^t$ ($t=1/\kappa_1(p)$) is a cuspidal lips, 
then $(\kappa_1)_{uu}(p)\neq0$. 
If we consider the parallel surface $f^t$ with $t=1/\kappa_2(p)$, 
$\til{\lambda}=\kappa_2-\kappa_2(p)$ is an identifier of singularities 
and $\partial_v$ is a null vector field for $f^t$. 
Thus one can obtain similar characterizations as Proposition \ref{prop:lips/beaks}.}

\subsection{The limiting normal curvature}
{
We here consider the limiting normal curvature $\kappa_\nu^t$ of $f^t$ 
at a rank one singular point.
In particular, we give the proof of Proposition \ref{prop:kn-para}. 
}
\begin{proof}[Proof of Proposition \ref{prop:kn-para}]
We take a curvature line coordinates $(u,v)$. 
{We show the case where $t=1/\kappa_1(p)$.} 
We first consider the case where the parallel surface $f^t$ is a cuspidal edge at $p$. 
An identifier of singularities $\til{\lambda}$ for $f^t$ 
is given by \eqref{eq:cpc}. 
By Proposition \ref{prop:lips/beaks}, 
there exists a function $u(v)$ such that 
$\til\lambda(u(v),v)=0$. 
We set a curve $c(v)=(u(v),v)$, and 
define $\hat{c}(v)=f^t(u(v),v)$. 
Direct calculations show 
\begin{equation}\label{eq:diff-c}
	\begin{aligned}
		\hat{c}'(v)&=f^t_u(c(v))u'(v)+f^t_v(c(v))
		=\left(1-\frac{\kappa_2(c(v))}{\kappa_1(c(v))}\right)f_v(c(t)),\\
		\hat{\nu}'(v)&=-\kappa_1(c(v))f_u(c(v))u'(v)-\kappa_2(c(v))f_v(c(v)),
	\end{aligned}
\end{equation}
where $'=d/dv$ and $\hat{\nu}(v)=\nu(u(v),v)$. 
{By \eqref{eq:invariants},} we have 
\[
\kappa_\nu^{t}(v)=-\frac{\inner{\hat{c}'(v)}{\hat{\nu}'(v)}}{|\hat{c}'(v)|^2}
=\frac{\kappa_1(c(v))\kappa_2(c(v))}{\kappa_1(c(v))-\kappa_2(c(v))}.
\]	

We next show the case where $p$ is a non-degenerate singular point of $f^t$ 
other than a cuspidal edge. 
In this case, we get $(\kappa_1)_u(p)=(\kappa_1)_{uu}(p)=0$ 
and $(\kappa_1)_{v}(p)\neq0$. 
Thus by the implicit function theorem, 
there exists a function $v(u)$ such that $\kappa_1(u,v(u))-\kappa_1(p)=0$. 
Let us set $c(u)=(u,v(u))$ and $\hat{c}(u)=f^t(u,v(u))$. 
Then it holds that 
\[
\begin{aligned}
	\dot{\hat{c}}(u)&=f^t_u(c(u))+f^t_{v}(c(u))\dot{v}(u)
	=\left(1-\frac{\kappa_2(c(u))}{\kappa_1(c(u))}\right)f_v(c(u))\dot{v}(u),\\
	\dot{\hat{\nu}}(u)&=
	-\kappa_1(c(u))f_u(c(u))-\kappa_2(c(u))f_v(c(u))\dot{v}(u),
\end{aligned}
\]
where $\hat{\nu}(u)=\nu(u,v(u))$ and $\dot{}=d/du$. 
Therefore we get 
\[
\begin{aligned}
	|\dot{\hat{c}}(u)|^2
	&=\frac{(\kappa_1(c(u))-\kappa_2(c(u)))^2}{\kappa_1(c(u))^2}\dot{v}(u)^2G(c(u)),\\
	-\inner{\dot{\hat{c}}(u)}{\dot{\hat{\nu}}(u)}
	&=\frac{\kappa_2(c(u))(\kappa_1(c(u))-\kappa_2(c(u)))}{\kappa_1(c(u))}\dot{v}(u)^2G(c(u)).
\end{aligned}
\]
Hence {by \eqref{eq:kn-nondeg},} we have 
\[
\kappa_\nu^t(p)
=\lim_{u\to u_0}-\frac{\inner{\dot{\hat{c}}(u)}{\dot{\hat{\nu}}(u)}}{|\dot{\hat{c}}(u)|^2}
=\frac{\kappa_1(p)\kappa_2(p)}{\kappa_1(p)-\kappa_2(p)}.
\]

We finally deal with the case where $p$ is a degenerate corank one singular point of $f^t$. 
Since $f^t_u(p)=0$ and $f^t_v(p)\neq0$, 
the limiting normal curvature can be calculated as 
\[
\kappa_\nu^t(p)=\frac{\inner{f^t_{vv}}{\nu(p)}}{|f^t_v(p)|^2}
\]
{by \eqref{eq:kn-deg}}. 
By \eqref{eq:d-ft}, we have 
\[
f^t_{vv}=-t(\kappa_2)_vf_v+(1-t\kappa_2)f_{vv}.
\]
Thus we get 
\[
|f^t_v(p)|^2=\frac{(\kappa_1(p)-\kappa_2(p))^2}{\kappa_1(p)^2}G(p),\quad 
\inner{f^t_{vv}(p)}{\nu(p)}
=\frac{\kappa_2(p)(\kappa_1(p)-\kappa_2(p))}{\kappa_1(p)}G(p),
\]
where we used $N(p)=\kappa_2(p)G(p)$. 
Therefore $\kappa_\nu^t(p)$ can be written by 
\[
\kappa_\nu^t(p)=\frac{\kappa_1(p)\kappa_2(p)}{\kappa_1(p)-\kappa_2(p)}.
\]
Hence we have the assertions. 
{For the case of $t=1/\kappa_2(p)$, one can show in a similar way.}
\end{proof} 

\subsection{The singular curvature}
We here consider the singular curvature of the cuspidal edge 
appeared as a singularity of a parallel surface. 
\begin{proposition}\label{prop:ks-para}
	Let $f\colon U\to\R^3$ be a regular surface without umbilic point. 
	Assume that $f$ is parametrized by the curvature line coordinate system $(u,v)$. 
	If the parallel surface $f^t$ with $t=1/\kappa_1(p)$ \textup{(}resp. $t=1/\kappa_2(p)$\textup{)} 
	is a cuspidal edge at $p\in U$, then the singular curvature of $f^t$ at $p$ is  
	\[
	\kappa_s^t=-\frac{|\kappa_1|((\kappa_1)_v^2E-(\kappa_1)_u(\kappa_2)_uG)}
	{|(\kappa_1)_u|(\kappa_1-\kappa_2)^2G\sqrt{E}}\quad 
	\left(\text{resp.}~\kappa_s^t=-\frac{|\kappa_2|((\kappa_2)_u^2G-(\kappa_1)_v(\kappa_2)_vE)}
	{|(\kappa_2)_v|(\kappa_1-\kappa_2)^2E\sqrt{G}}\right).
	\]
\end{proposition}
\begin{proof}
	We show the case that $f^t$ with $t=1/\kappa_1(p)$ 
	is a cuspidal edge at $p$. 
	Then by the previous arguments, there exists a regular curve $c(v)=(u(v),v)$ 
	passing through $p=c(v_0)$ such that $\til{\lambda}(c(v))=0$, 
	where $\til{\lambda}(u,v)=\kappa_1(u,v)-\kappa_1(p)$. 
	We set $\hat{c}(v)=f^t(c(v))$. 
	Then by \eqref{eq:diff-c}, the second order differential $\hat{c}''(v)$ can be calculated as 
	\[
	\hat{c}''(v)=-\left(\frac{\kappa_2(c(v))}{\kappa_1(c(v))}\right)'f_v(c(v))
	+\left(1-\frac{\kappa_2(c(v))}{\kappa_1(c(v))}\right)(f_{uv}(c(v))u'(v)+f_{vv}(c(v))).
	\]
	Since $\til{\lambda}(c(v))=\kappa_1(c(v))-\kappa_1(p)=0$, we have 
	\(
	(\kappa_1)_u(c(v))u'(v)+(\kappa_1)_v(c(v))=0
	\). 
	Since $(\kappa_1)_u(p)\neq0$, it holds that 
	\begin{equation}\label{eq:du2}
		u'(v_0)=-\frac{(\kappa_1)_v(p)}{(\kappa_1)_u(p)}.
	\end{equation}
	By \eqref{eq:gauss}, Lemma \ref{lem:d-kappa} and \eqref{eq:du2}, 
	we get
	\[
	\begin{aligned}
		f_{uv}u'+f_{vv}
		&=-\frac{(\kappa_1)_v}{(\kappa_1)_u}
		\left(-\frac{(\kappa_1)_v}{\kappa_1-\kappa_2}f_u
		+\frac{(\kappa_2)_u}{\kappa_1-\kappa_2}f_v\right)
		-\frac{G_u}{2E}f_u+\frac{G_v}{2G}f_v+N\nu\\
		&=\frac{(\kappa_1)_v^2E-(\kappa_1)_u(\kappa_2)_uG}{(\kappa_1)_u(\kappa_1-\kappa_2)E}f_u
		+\left(\frac{G_v}{2G}-\frac{(\kappa_1)_v(\kappa_2)_u}{\kappa_1-\kappa_2}\right)f_v+N\nu
	\end{aligned}
	\]
	at $p$. 
	Therefore we have 
	\begin{equation}\label{eq:diff-c2}
		\hat{c}''=\frac{(\kappa_1)_v^2E-(\kappa_1)_u(\kappa_2)_uG}{\kappa_1(\kappa_1)_uE}f_u
		+Af_v+N\nu
	\end{equation}
	at $p=c(v_0)$, where $A$ is some constant. 
	By \eqref{eq:diff-c} and \eqref{eq:diff-c2}, it holds that 
	\begin{equation}\label{eq:det-para}
		\begin{aligned}
		\det(\hat{c}',\hat{c}'',\nu)
		&=\frac{(\kappa_1-\kappa_2)}{\kappa_1}\frac{(\kappa_1)_v^2E-(\kappa_1)_u(\kappa_2)_uG}{\kappa_1(\kappa_1)_uE}
		\det(f_v,f_u,\nu)\\
		&=-\frac{(\kappa_1-\kappa_2)((\kappa_1)_v^2E-(\kappa_1)_u(\kappa_2)_uG)}
		{\kappa_1^2(\kappa_1)_u}\sqrt{\frac{G}{E}}
		\end{aligned}
	\end{equation}
	at $p$. 
	On the other hand, the signed area density function $\lambda^t$ of $f^t$ is 
	\[
	\lambda^t=(1-t\kappa_1)(1-t\kappa_2)\det(f_u,f_v,\nu)
	=(1-t\kappa_1)(1-t\kappa_2)\sqrt{EG}.
	\]
	Since $\eta^t=\partial_u$ is a null vector field for $f^t$, 
	we see that 
	\[
	\eta^t\lambda^t=-t(\kappa_1)_u(1-t\kappa_2)\sqrt{EG}
	=-\frac{(\kappa_1)_u(\kappa_1-\kappa_2)}{\kappa_1^2}\sqrt{EG}
	\]
	holds at $p$. 
	Moreover, we obtain $\det(c',\eta^t)=-1$. 
	Hence we get 
	\[
	\sgn(\eta^t\lambda^t\det(c',\eta^t))
	=\sgn\left(\frac{(\kappa_1)_u(\kappa_1-\kappa_2)}{\kappa_1^2}\sqrt{EG}\right)
	=\sgn((\kappa_1)_u(\kappa_1-\kappa_2))
	\]
	at $p$. 
	Thus the singular curvature $\kappa_s^t$ is calculated as 
	\[
	\begin{aligned}
		\kappa_s^t&=-\sgn((\kappa_1)_u(\kappa_1-\kappa_2))
		\frac{\kappa_1^2|\kappa_1|}{(\kappa_1-\kappa_2)^2|\kappa_1-\kappa_2|G\sqrt{G}}
		\frac{(\kappa_1-\kappa_2)((\kappa_1)_v^2E-(\kappa_1)_u(\kappa_2)_uG)}
		{\kappa_1^2(\kappa_1)_u}\sqrt{\frac{G}{E}}\\
		&=-\sgn((\kappa_1)_u(\kappa_1-\kappa_2))
		\frac{|\kappa_1|((\kappa_1)_v^2E-(\kappa_1)_u(\kappa_2)_uG)}
		{(\kappa_1)_u(\kappa_1-\kappa_2)|\kappa_1-\kappa_2|G\sqrt{E}}\\
		&=-\frac{(\kappa_1)_u(\kappa_1-\kappa_2)}{|(\kappa_1)_u||\kappa_1-\kappa_2|}
		\frac{|\kappa_1|((\kappa_1)_v^2E-(\kappa_1)_u(\kappa_2)_uG)}
		{(\kappa_1)_u(\kappa_1-\kappa_2)|\kappa_1-\kappa_2|G\sqrt{E}}\\
		&=-\frac{|\kappa_1|((\kappa_1)_v^2E-(\kappa_1)_u(\kappa_2)_uG)}
		{|(\kappa_1)_u|(\kappa_1-\kappa_2)^2G\sqrt{E}}
	\end{aligned}
	\]
	at $p$. 
	For the case of $t=1/\kappa_2(p)$, 
	one can show in a similar way.
\end{proof}

\section{Focal surfaces}\label{sec:focal}
Let $f\colon U\to\R^3$ be a regular surface without umbilic point 
parametrized by a curvature line coordinate system $(u,v)$. 
Let $\kappa_i$ $(i=1,2)$ be principal curvatures for $f$. 
Then the maps 
\begin{equation}\label{eq:caustic}
C_i\coloneqq f+\frac{1}{\kappa_i}\nu\colon U\setminus\kappa_i^{-1}(0)\to\R^3 
\quad (i=1,2)
\end{equation}
are called \textit{focal surfaces} (or \textit{caustics}) of $f$. 
By direct calculations, we see that 
\begin{equation}\label{eq:diff-C}
\begin{aligned}
	(C_1)_u&=-\frac{(\kappa_1)_u}{\kappa_1^2}\nu,\quad 
	(C_1)_v=\left(1-\frac{\kappa_2}{\kappa_1}\right)f_v-\frac{(\kappa_1)_{v}}{\kappa_1^2}\nu,\\
	(C_2)_u&=\left(1-\frac{\kappa_1}{\kappa_2}\right)f_u-\frac{(\kappa_2)_{u}}{\kappa_2^2}\nu,\quad 
	(C_2)_v=-\frac{(\kappa_2)_{v}}{\kappa_2^2}\nu.
\end{aligned}
\end{equation}
Therefore $\be_1$ (resp. $\be_2$) as in \eqref{eq:frame} 
can be taken as a unit normal vector to $C_1$ (resp. $C_2$). 
{It is known that $C_1$ (resp. $C_2$) is a front (\cite[Theorem 7.1.1]{usy-book}).} 
Moreover, the signed area density functions for $C_1$ and $C_2$ are 
\begin{equation}\label{eq:lam-C}
\begin{aligned}
	\lambda^{C_1}&=\det((C_1)_u,(C_1)_v,\be_1)
	=\frac{(\kappa_1)_u}{\kappa_1^2}\left(1-\frac{\kappa_2}{\kappa_1}\right)|f_v|,\\ 
	\lambda^{C_2}&=\det((C_2)_u,(C_2)_v,\be_2)
	=\frac{(\kappa_2)_v}{\kappa_2^2}\left(1-\frac{\kappa_1}{\kappa_2}\right)|f_u|,
\end{aligned}
\end{equation}
respectively. 
Since $\kappa_1\neq \kappa_2$ on $U$, 
the sets of singular points of $C_i$ ($i=1,2$) are 
\[
S(C_1)=\{q\in U\mid (\kappa_1)_u(q)=0\},\quad 
S(C_2)=\{q\in U\mid (\kappa_2)_v(q)=0\}
\]
(cf. \cite{ifrt,porteous1}). 
For a point $p\in S(C_1)$ (resp. $p\in S(C_2)$), 
we see that $\rank d(C_1)_p=1$ (resp. $\rank d(C_2)_p=1$). 
Thus one can take 
$\eta^{C_1}=\partial_u$ (resp. $\eta^{C_2}=\partial_v$) 
as a null vector field for $C_1$ (resp. $C_2$). 
{By Fact \ref{fact:crit}, we have the following.

\begin{proposition}[cf. {\cite[Theorem 7.1.1]{usy-book}}]\label{prop:sing-C}
	Let $f\colon U\to\R^3$ be a surface without umbilic point 
	parametrized by a curvature line coordinate system $(u,v)$. 
	Suppose that $\kappa_1$ \textup{(}resp. $\kappa_2$\textup{)} 
	does not vanish on $U$. 
	Then the focal surface $C_1$ \textup{(}resp. $C_2$\textup{)}
	is a cuspidl edge at $p\in U$ 
	if and only if $(\kappa_1)_u(p)=0$ and $(\kappa_1)_{uu}(p)\neq0$ 
	\textup{(}resp. $(\kappa_2)_v(p)=0$ and $(\kappa_2)_{vv}(p)\neq0$\textup{)}. 
\end{proposition}
By Propositions \ref{prop:lips/beaks} and \ref{prop:sing-focal}, 
the following assertions hold. 
}
\begin{proposition}[cf. \cite{tera2}]\label{prop:sing-focal}
\begin{enumerate}[label={\rm(\arabic*)}]
	\item When the parallel surface $f^t$ with $t=1/\kappa_i(p)$ 
	is a cuspidal edge at $p$, then $C_i$ is regular at $p$. 
	\item When $f^t$ is either a swallowtail, a cuspidal lips 
	or a cuspidal beaks at $p$, then $C_i$ is a cuspidal edge at $p$. 
\end{enumerate} 
\end{proposition}

\begin{proof}
We show the case of $C_1$. 
Let us take a curvature line coordinate system $(u,v)$. 
{The first assertion holds by the previous discussion.} 
{Thus we show the second assertion.} 
We assume that $f^t$ is one of a swallowtail, a cuspidal lips or a cuspidal beaks. 
Then by Proposition \ref{prop:lips/beaks}, it holds that 
{$(\kappa_1)_u(p)=0$ and $(\kappa_1)_{uu}(p)\neq0$}. 
{Thus by Proposition \ref{prop:sing-C}, 
	the assertion follows.}

For the case of $C_2$, one can show in a similar way. 
\end{proof}

\begin{rmk}
This proposition implies that 
if $f$ is a front with a cuspidal lips or cuspidal beaks, 
then its (suitable) focal surface is a cuspidal edge 
at the corresponding point (cf. \cite{tera2}). 
\end{rmk}

When the parallel surface $f^t$ with $t=1/\kappa_i(p)$ ($i=1$ or $2$) is a cuspidal edge at $p$, 
the Gaussian curvature of $C_i$ and the singular curvature of $f^t$ are related as follows: 
\begin{corollary}
	Let $f\colon U\to\R^3$ be a regular surface. 
	Assume that the parallel surface $f^t$ with $t=1/\kappa_i(p)$ 
	\textup{(}$i=1$ or $2$\textup{)} of $f$ 
	is a cuspidal edge at $p$. 
	If the Gaussian curvature of $C_i$ of the focal surface $C_i$ vanishes at $p$, 
	then the singular curvature of $f^t$ is non-positive at $p$. 
\end{corollary}
\begin{proof}
	Take a curvature line coordinate system $(u,v)$. 
	We then show the case for $i=1$. 
	In this case, the Gaussian curvature of $C_1$ vanishes at $p$ 
	if and only if $(\kappa_2)_u(p)=0$ by \eqref{eq:KC} in the Appendix \ref{sec:appendix}. 
	In this case, the singular curvature $\kappa_s^t$ of $f^t$ at $p$ is 
	\[
	\kappa_s^t=-\frac{|\kappa_1|(\kappa_1)_v^2\sqrt{E}}{|(\kappa_1)_u|(\kappa_1-\kappa_2)^2G}\leq0.
	\]
	Thus we have the assertion. 
	For $i=2$, we can show similarly. 
\end{proof}

\subsection{Geometric invariants at cuspidal edges of focal surfaces}
We investigate geometric invariants of focal surfaces at cuspidal edges. 
{Let $f\colon U\to\R^3$ be a regular surface without umbilic points.
Assume that $C_i$ ($i=1$ or $2$) is a cuspidal edge at $p\in U$. 
Then we consider the limiting normal curvature $\kappa_\nu^{C_i}$ and 
the singular curvature $\kappa_s^{C_i}$ of $C_i$ at $p$. 
In particular, we shall give the proof of Theorem \ref{thm:ks-c1}.}

{
Using a curvature line coordinate system $(u,v)$ for $f$, 
the limiting normal curvature and the singular curvature for $C_i$ 
obtained as follows.
\begin{proposition}\label{prop:curvatures-C}
	Let $f\colon U\to\R^3$ be a regular surface without umbilic points. 
	Assume that $f$ is parametrized by a curvature line coordinate system $(u,v)$. 
	\begin{enumerate}[label={\rm(\arabic*)}]
		\item If $C_1$ is a cuspidal edge at $p\in U$, 
		then the limiting normal curvature $\kappa_\nu^{C_1}$ and 
		the singular curvature $\kappa_s^{C_1}$ can be written as 
		\begin{equation}\label{eq:kskn-C1}
			\begin{aligned}
				\kappa_\nu^{C_1}&=
				-\frac{\kappa_1^3(\kappa_2)_uG}
				{\sqrt{E}((\kappa_1-\kappa_2)^2\kappa_1^2G+(\kappa_1)_v^2)},\\
				\kappa_s^{C_1}&=
				-\sgn((\kappa_1)_{uu}\kappa_1(\kappa_1-\kappa_2))
				\frac{\kappa_1^3\sqrt{G}X_1}
				{((\kappa_1-\kappa_2)^2\kappa_1^2G+(\kappa_1)_v^2)^{3/2}}
			\end{aligned}
		\end{equation}
		at $p$, where $X_1$ is 
		\[
		X_1=(\kappa_1-\kappa_2)\left(
		(\kappa_1-\kappa_2)\kappa_1\kappa_2G-\frac{\det\hess(\kappa_1)}{(\kappa_1)_{uu}}\right)
		+(\kappa_1)_v(2(\kappa_1)_v+\Gamma^{2}_{22}(\kappa_1-\kappa_2)-(\kappa_2)_v).
		\]
		\item If $C_2$ is a cuspidal edge at $p\in U$, 
		then the limiting normal curvature $\kappa_\nu^{C_2}$ and 
		the singular curvature $\kappa_s^{C_2}$ can be written as 
		\begin{equation}\label{eq:kskn-C2}
			\begin{aligned}
				\kappa_\nu^{C_2}&=
				-\frac{\kappa_2^3(\kappa_1)_vE}
				{\sqrt{G}((\kappa_1-\kappa_2)^2\kappa_2^2E+(\kappa_2)_u^2)},\\
				\kappa_s^{C_2}&=
				-\sgn((\kappa_2)_{vv}\kappa_2(\kappa_2-\kappa_1))
				\frac{\kappa_2^3\sqrt{E}X_2}
				{((\kappa_1-\kappa_2)^2\kappa_2^2E+(\kappa_2)_u^2)^{3/2}}
			\end{aligned}
		\end{equation}
		at $p$, where $X_2$ is 
		\[
		X_2=(\kappa_2-\kappa_1)\left(
		(\kappa_2-\kappa_1)\kappa_1\kappa_2E-\frac{\det\hess(\kappa_2)}{(\kappa_2)_{vv}}\right)
		+(\kappa_2)_u(2(\kappa_2)_u+\Gamma^{1}_{11}(\kappa_2-\kappa_1)-(\kappa_1)_u).
		\]
	\end{enumerate}
\end{proposition}
\begin{proof}
	We show the case of $C_1$. 
	By \eqref{eq:lam-C} and 
	Proposition \ref{prop:sing-C}, $S(C_1)=((\kappa_1)_u)^{-1}(0)$ and $(\kappa_1)_{uu}(p)\neq0$ hold. 
	Thus there exists a regular curve $\gamma(v)=(u(v),v)$ ($|v-v_0|<\eps$) 
	with $\gamma(v_0)=p$ such that 
	$(\kappa_1)_u(u(v),v)=0$ by the implicit function theorem. 
	The curve $\gamma(v)$ is a singular curve for $C_1$, 
	and the curve 
	\begin{equation}\label{eq:hat-gamma}
		\hat{\gamma}(v)=C_1(\gamma(v))=C_1(u(v),v)
	\end{equation}
	gives the singular locus of $C_1$ near $p$. 
	By \eqref{eq:diff-C} and $(\kappa_1)_u(u(v),v)=0$, 
	it holds that 
	\begin{equation}\label{eq:d-gammas}
		\begin{aligned}
			\hat{\gamma}'(v)&=(C_1)_v(\gamma(v))
			=\left(1-\frac{\kappa_2(\gamma(v))}{\kappa_1(\gamma(v))}\right)f_v(\gamma(v))
			-\frac{(\kappa_1)_v(\gamma(v))}{\kappa_1(\gamma(v))^2}\nu(\gamma(v)),\\
			\hat{\gamma}''(v)&=
			(C_1)_{uv}(\gamma(v))u'(v)+(C_1)_{vv}(\gamma(v)),
		\end{aligned}
	\end{equation}	
	where $'=d/dv$. 
	By \eqref{eq:gauss} and \eqref{eq:diff-C}, we have 
	\begin{equation}\label{eq:dd-C}
		\begin{aligned}
			(C_1)_{uv}&=\left(\frac{2(\kappa_1)_u(\kappa_1)_v}{\kappa_1^3}
			-\frac{(\kappa_1)_{uv}}{\kappa_1^2}\right)\nu
			+\frac{\kappa_2(\kappa_1)_u}{\kappa_1^2}f_v
			=-\frac{(\kappa_1)_{uv}}{\kappa_1^2}\nu,\\
			(C_1)_{vv}&=\left(1-\frac{\kappa_2}{\kappa_1}\right)f_{vv}
			+\left(\frac{2\kappa_2(\kappa_1)_v}{\kappa_1^2}-\frac{(\kappa_2)_{v}}{\kappa_1}\right)f_v
			+\left(\frac{2(\kappa_1)_v^2}{\kappa_1^3}-\frac{(\kappa_1)_{vv}}{\kappa_1^2}\right)\nu\\
			&=\left(1-\frac{\kappa_2}{\kappa_1}\right)\Gamma^{1}_{22}f_u
			+\left(\frac{2\kappa_2(\kappa_1)_v}{\kappa_1^2}-\frac{(\kappa_2)_{v}}{\kappa_1}
			+\left(1-\frac{\kappa_2}{\kappa_1}\right)\Gamma^{2}_{22}\right)f_v
			+\left(\frac{2(\kappa_1)_v^2}{\kappa_1^3}-\frac{(\kappa_1)_{vv}}{\kappa_1^2}
			+\left(1-\frac{\kappa_2}{\kappa_1}\right)N\right)\nu
		\end{aligned}
	\end{equation}
	along $\gamma$. 
	On the other hand, differentiating $(\kappa_1)_{u}(u(v),v)=0$ by $v$, 
	we get 
	\[
	(\kappa_1)_{uu}(\gamma(v))u'(v)+(\kappa_1)_{uv}(\gamma(v))=0.	
	\]
	Thus we see that 
	\begin{equation}\label{eq:du}
		u'(v)=-\frac{(\kappa_1)_{uv}(\gamma(v))}{(\kappa_1)_{uu}(\gamma(v))}
	\end{equation}
	near $p$. 
	By \eqref{eq:d-gammas}, \eqref{eq:dd-C} and \eqref{eq:du}, 
	the second derivative of $\hat{\gamma}$ is 
	\begin{equation}\label{eq:sec-dgamma}
		\begin{aligned}
			\hat{\gamma}''(v)
			&=\left(1-\frac{\kappa_2(\gamma(v))}{\kappa_1(\gamma(v))}\right)
			\Gamma^{1}_{22}(\gamma(v))f_u(\gamma(v))\\
			&\quad+\left(\frac{2\kappa_2(\gamma(v))(\kappa_1)_v(\gamma(v))}{\kappa_1(\gamma(v))^2}
			-\frac{(\kappa_2)_{v}(\gamma(v))}{\kappa_1(\gamma(v))}
			+\left(1-\frac{\kappa_2(\gamma(v))}{\kappa_1(\gamma(v))}\right)\Gamma^{2}_{22}(\gamma(v))\right)
			f_v(\gamma(v))\\
			&\quad\quad+\left(-\frac{\det\hess(\kappa_1)(\gamma(v))}{\kappa_1(\gamma(v))^2(\kappa_1)_{uu}(\gamma(v))}
			+\frac{2(\kappa_1)_v(\gamma(v))^2}{\kappa_1(\gamma(v))^3}
			+\left(1-\frac{\kappa_2(\gamma(v))}{\kappa_1(\gamma(v))}\right)N(\gamma(v))	
			\right)\nu(\gamma(v)).
		\end{aligned}
	\end{equation}
	Since $\be_1$ is a unit normal vector to $C_1$, 
	the limiting normal curvature of $C_1$ at $p$ is 
	\[
	\kappa_\nu^{C_1}(p)=\frac{\inner{\hat{\gamma}''(v_0)}{\be_1(p)}}{|\hat{\gamma}'(v_0)|^2}
	\]
	by \eqref{eq:invariants}. 
	By \eqref{eq:gauss}, \eqref{eq:d-gammas}, \eqref{eq:sec-dgamma} 
	and Lemma \ref{lem:d-kappa}, we have 
	\[
	\begin{aligned}
		|\hat{\gamma}'(v_0)|^2&
		=\frac{1}{\kappa_1(p)^4}
		\left((\kappa_1(p)-\kappa_2(p))^2\kappa_1(p)^2G(p)+(\kappa_1)_v(p)^2\right),\\
		\inner{\hat{\gamma}''(v_0)}{\be_1(p)}&=
		\frac{\kappa_1(p)-\kappa_2(p)}{\kappa_1(p)}\Gamma^{1}_{22}(p)\sqrt{E(p)}
		=-\frac{\kappa_1(p)-\kappa_2(p)}{\kappa_1(p)}\frac{G_u}{2\sqrt{E(p)}}
		=-\frac{(\kappa_2)_u(p)G(p)}{\kappa_1(p)\sqrt{E(p)}}.
	\end{aligned}
	\]
	Thus we obtain the assertion for $\kappa_\nu^{C_1}$. 
	
	We next consider $\kappa_s^{C_1}$. 
	The singular curvature $\kappa_s^{C_1}$ at $p$ is 
	\[
	\kappa_s^{C_1}(p)
	=\sgn(\det(\gamma',\eta^{C_1})(v_0)\cdot\eta^{C_1}\lambda^{C_1}(p))
	\frac{\det(\hat{\gamma}'(v_0),\hat{\gamma}''(v_0),\be_1(p))}
	{|\hat{\gamma}'(v_0)|^3}.
	\]	
	By the scalar triple product, 
	it holds that 
	\[
	\det(\hat{\gamma}'(v_0),\hat{\gamma}''(v_0),\be_1(p))
	=\inner{\be_1(p)\times\hat{\gamma}'(v_0)}{\hat{\gamma}''(v_0)}.
	\]
	By \eqref{eq:d-gammas}, we see that 
	\[
	\be_1(p)\times\hat{\gamma}'(v_0)
	=\frac{(\kappa_1)_v(p)}{\kappa_1(p)^2}\be_2(p)
	+\frac{\kappa_1(p)-\kappa_2(p)}{\kappa_1(p)}\sqrt{G(p)}\nu(p).
	\]
	Thus by \eqref{eq:sec-dgamma}, we have 
	\[
	\det(\hat{\gamma}'(v_0),\hat{\gamma}''(v_0),\be_1(p))
	=\inner{\be_1(p)\times \hat{\gamma}'(v_0)}{\hat{\gamma}''(v_0)}
	=\frac{\sqrt{G}(p)}{\kappa_1(p)^3}X_1(p).
	\]
	On the other hand, 
	since $\eta^{C_1}=\partial_u$, we get 
	\[
	\eta^{C_1}\lambda^{C_1}(p)=\frac{(\kappa_1)_{uu}(p)(\kappa_1(p)-\kappa_2(p))}{\kappa_1(p)^3}|f_v(p)|,\quad 
	\det(\gamma',\eta)(v_0)=-1
	\]
	by \eqref{eq:lam-C}. 
	Hence it follows that 
	\[
	\sgn(\det(\gamma',\eta)(v_0)\cdot\eta^{C_1}\lambda^{C_1}(p))
	=-\sgn((\kappa_1)_{uu}(p)\kappa_1(p)(\kappa_1(p)-\kappa_2(p))).
	\]
	Thus we have the conclusion for $\kappa_s^{C_1}$. 
	
	For $C_2$, one can show in a similar manner. 
\end{proof}
}

By using Proposition \ref{prop:curvatures-C}, 
we show Theorem \ref{thm:ks-c1}. 
\begin{proof}[Proof of Theorem \ref{thm:ks-c1}]
We show the case of $C_1$. 
By Proposition \ref{prop:kn-para}, 
$\kappa_2(p)=0$. 
Let us take a curvature line coordinate system $(u,v)$. 
Then by Proposition \ref{prop:lips/beaks}, 
it holds that $(\kappa_1)_u(p)=(\kappa_1)_v(p)=0$ 
and 
$\det\hess(\kappa_1)(p)>0$ (resp. $\det\hess(\kappa_1)(p)<0$) 
when $f^t$ is a cuspidal lips (resp. a cuspidal beaks) at $p$. 
On the other hand, by Proposition \ref{prop:curvatures-C}, 
the singular curvature $\kappa_s^{C_1}(p)$ is given by 
\[
\kappa_s^{C_1}(p)
=\sgn((\kappa_1)_{uu}(p))\frac{\det\hess(\kappa_1)(p)}{(\kappa_1)_{uu}(p)\kappa_1(p)^2G(p)}.
\]
Therefore $\sgn(\kappa_s^{C_1}(p))=\sgn(\det\hess(\kappa_1)(p))$ holds. 
Hence we get the assertion. 
\end{proof}
\begin{example}
Let $f\colon\R^2\to\R^3$ be a $C^\infty$ map given by 
\[
f(u,v)=\left(u,v,\frac{1}{2}u^2+u^4+u^3v\right).
\]
Then principal curvatures satisfy 
\[
\kappa_1(0,0)=1,\quad
\kappa_2(0,0)=0.
\]
Moreover, we see that the parallel surface $f^1=f+\nu$ 
is a cuspidal beaks at $(0,0)$ (see Figure \ref{fig:ex-cbk}). 
We find that the limiting normal curvature of $f^1$ vanishes at $(0,0)$. 
In addition, the singular curvature $\kappa_s^{C_1}$ of $C_1$ is 
$\kappa_s^{C_1}(0)=-12/7<0$. 

\begin{figure}[h]
	\centering
	\includegraphics[width=0.3\linewidth]{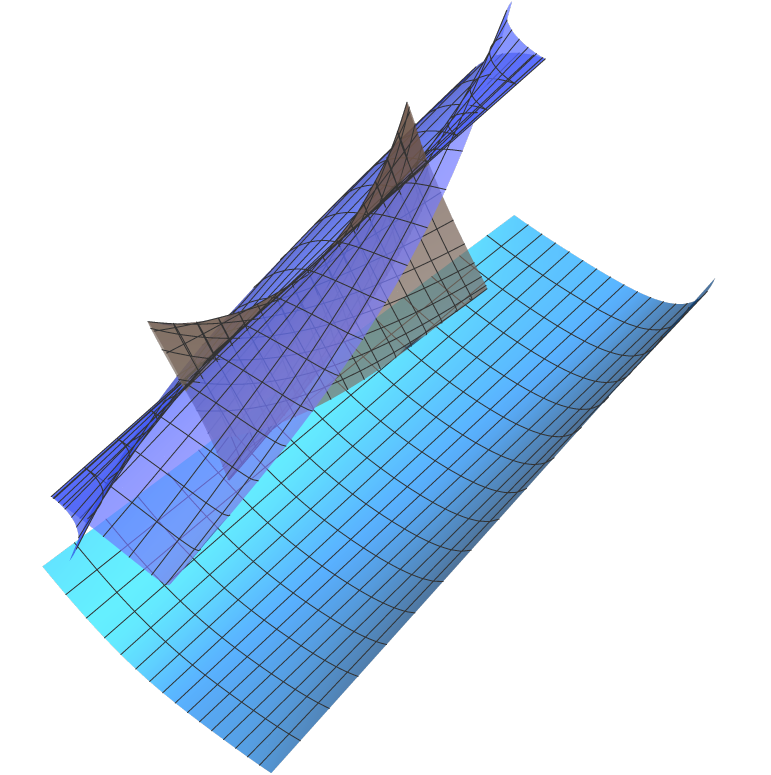}
	\caption{The images of $f$ (blue), $f^1$ (purple) and $C_1$ (gray).}
	\label{fig:ex-cbk}
\end{figure}
\end{example}

\begin{example}
Let $f\colon \R^2\to\R^3$ be a regular surface 
given by 
\[
f(u,v)=\left(u,v,\frac{1}{2}u^2+uv^2+u^4\right).
\]
Then 
\[
\nu(u,v)=\frac{(-u-v^2-4u^3,-2uv,1)}{\sqrt{1+4u^2v^2+(u+v^2+4u^3)^2}}
\]
is a unit normal vector to $f$. 
By direct calculation, we see that $f^1=f+\nu$ is a cuspidal lips at $(0,0)$. 
Moreover, it can be observed that the singular curvature $\kappa_s^{C_1}$ of 
the focal surface $C_1$ is $\kappa_s^{C_1}(0)=8>0$ at $(0,0)$ 
(see Figure \ref{fig:ex-clp}). 

\begin{figure}[h]
	\centering
	\includegraphics[width=0.3\linewidth]{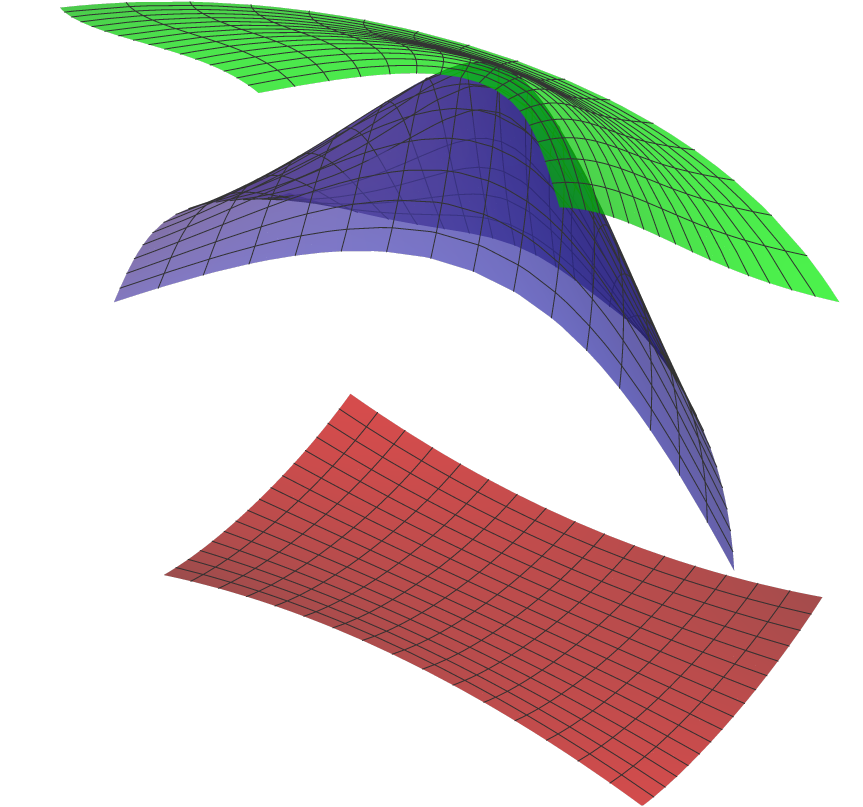}
	\caption{The images of $f$ (red), $f^1$ (green) and $C_1$ (purple).}
	\label{fig:ex-clp}
\end{figure}
\end{example}

{
\begin{example}\label{ex:counter}
	Let $f\colon \R^2\to\R^3$ be a regular surface given by 
	\[
	f(u,v)=\left(u,v,\frac{1}{2}u^2-v^2+u^4+u^3v\right).
	\]
	Then the unit normal vector field $\nu$ is 
	\[
	\nu(u,v)=\frac{(-u(1+4u^2+3uv),-u^3+2v,1)}
	{\sqrt{1+(u^3+2v)^2+u^2(1+4u^2+3uv)^2}}.
	\]
	Principal curvatures $\kappa_1,\kappa_2$ of $f$ can be chosen as  
	\[
	\kappa_1(0,0)=-2,\quad\kappa_2(0,0)=1\neq0. 
	\]
	Then a parallel surface $f^{-1/2}=f-\nu/2$ has a cuspidal lips at $(0,0)$ 
	and $\kappa_\nu^{t}(0)\neq0$. 
	Moreover, the singular curvature of $C_1$ at the origin is calculated as 
	$\kappa_s^{C_1}(0)=-9/4<0$ 
	(see Figure \ref{fig:cex}). 
	This gives a counterexample of Theorem \ref{thm:ks-c1}. 
	\begin{figure}[h]
		\centering
		\begin{minipage}[b]{0.3\hsize}
			\centering
			\includegraphics[width=0.65\linewidth]{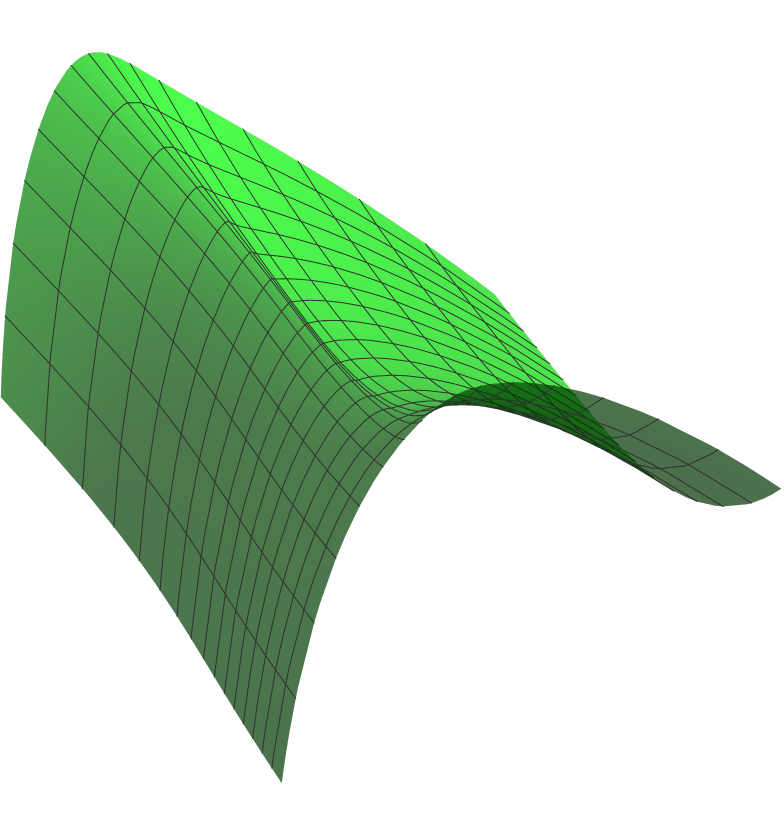}
		\end{minipage}
		\begin{minipage}[b]{0.3\hsize}
			\centering
			\includegraphics[width=0.65\linewidth]{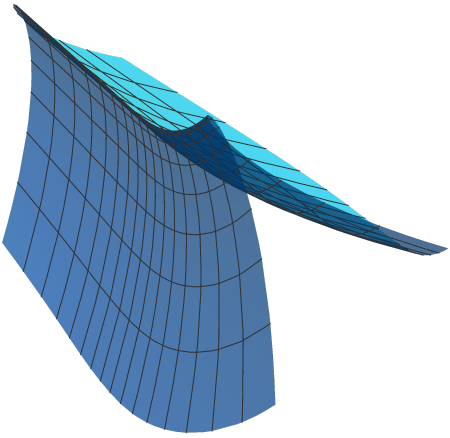}
		\end{minipage}
		\begin{minipage}[b]{0.3\hsize}
			\centering
			\includegraphics[width=0.65\linewidth]{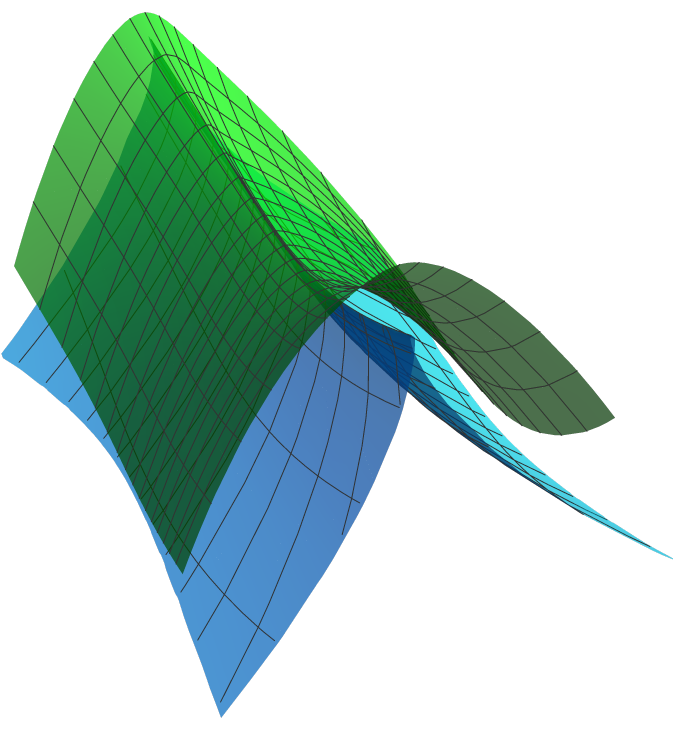}
		\end{minipage}
		\caption{The image of $f^{-1/2}$ (left), 
			$C_1$ (middle) and both of them (right). }
		\label{fig:cex}
	\end{figure}
\end{example}

If the Gaussian curvature of $f^t$ is bounded near 
its singular point $p$, then $\kappa_\nu^t$ vanishes at $p$ 
(see Appendix \ref{sec:appendix}). 
Thus we get the following by Theorem \ref{thm:ks-c1} immediately. 
\begin{corollary}
	Let $f\colon U\to\R^3$ be a regular surface 
	whose parallel surface $f^t$ is a cuspidal lips \textup{(}resp. cuspidal beaks\textup{)} 
	at $p\in U$, where $t=1/\kappa_i(p)$ \textup{(}$i=1$ or $2$\textup{)}. 
	If the Gaussian curvature $K^t$ of $f^t$ is bounded near $p$, 
	then $\kappa_s^{C_i}$ for $C_i$ 
	is positive \textup{(}resp. negative\textup{)} at $p$.
\end{corollary}
\begin{proof}
	Let us take a curvature line coordinate system $(u,v)$. 
	We then consider the case of $i=1$. 
	By the assumption, it follows that $\kappa_1^{-1}(\kappa_1(p))\subset \kappa_2^{-1}(0)$ 
	(see Appendix \ref{sec:appendix}). 
	Thus we have $\kappa_2(p)=0$, 
	and hence $\kappa_\nu^t(p)=0$. 
	Therefore we have the assertion by Theorem \ref{thm:ks-c1}. 
	For $i=2$, one can show in the similar manner. 
\end{proof}
}

When the limiting normal curvature of the parallel surface does not vanish, 
we have the following.

\begin{proposition}
	Let $f\colon U\to\R^3$ be a regular surface without umbilic point. 
	Assume that $f$ is parametrized by a curvature line coordinate system $(u,v)$. 
	\begin{enumerate}
		\item If the parallel surface $f^t$ with $t=1/\kappa_1(p)$ 
		is a cuspidal lips \textup{(}resp. a cuspidal beaks\textup{)} 
		and $\sgn(\kappa_\nu^t(\kappa_1)_{uu})=-1$ \textup{(}resp. $1$\textup{)} at $p$, 
		then the singular curvature $\kappa_s^{C_1}$ of $C_1$ is positive 
		\textup{(}resp. negative\textup{)} at $p$.
		\item If the parallel surface $f^t$ with $t=1/\kappa_2(p)$ 
		is a cuspidal lips \textup{(}resp. a cuspidal beaks\textup{)} 
		and $\sgn(\kappa_\nu^t(\kappa_2)_{vv})=-1$ \textup{(}resp. $1$\textup{)} at $p$, 
		then the singular curvature $\kappa_s^{C_2}$ of $C_2$ is positive 
		\textup{(}resp. negative\textup{)} at $p$.
	\end{enumerate}
\end{proposition}
\begin{proof}
	We show the first assertion. 
	We assume that $f^t$ is a cuspidal lips or cuspidal beaks. 
	Then the singular curvature $\kappa_s^{C_1}$ of $C_1$ is 
	\[
	\begin{aligned}
		\kappa_s^{C_1}&=-\sgn((\kappa_1)_{uu}\kappa_1(\kappa_1-\kappa_2))
		\frac{\kappa_1^3X_1\sqrt{G}}{|\kappa_1-\kappa_2|^3|\kappa_1|^3G^{3/2}}\\
		&=-\frac{(\kappa_1)_{uu}\kappa_1(\kappa_1-\kappa_2)}
		{|(\kappa_1)_{uu}||\kappa_1||\kappa_1-\kappa_2|}
		\frac{\kappa_1^3(\kappa_1-\kappa_2)\sqrt{G}}{|\kappa_1-\kappa_2|^3|\kappa_1|^3G^{3/2}}
		\left(\kappa_1\kappa_2(\kappa_1-\kappa_2)G-\frac{\det\hess(\kappa_1)}{(\kappa_1)_{uu}}\right)\\
		&=-\frac{\kappa_1\kappa_2(\kappa_1-\kappa_2)(\kappa_1)_{uu}G-\det\hess(\kappa_1)}
		{(\kappa_1-\kappa_2)^2|(\kappa_1)_{uu}|G}
	\end{aligned}
	\]
	at $p$ by Proposition \ref{prop:curvatures-C}. 
	On the other hand, the limiting normal curvature $\kappa_\nu^t$ of $f^t$ is 
	\[
	\kappa_\nu^t=\frac{\kappa_1\kappa_2}{\kappa_1-\kappa_2}
	=\frac{\kappa_1\kappa_2(\kappa_1-\kappa_2)}{(\kappa_1-\kappa_2)^2}
	\]
	at $p$ by Proposition \ref{prop:kn-para}. 
	Thus $\sgn(\kappa_\nu^t)=\sgn(\kappa_1\kappa_2(\kappa_1-\kappa_2))$. 
	Since $\det\hess(\kappa_1)>0$ (resp. $<0$) at $p$ 
	when $f^t$ is a cuspidal lips (resp. cuspidal beaks) at $p$, 
	we have the conclusion. 
	
	For the second case, we can show in the similar way. 
\end{proof}

\appendix
{
\section{Gaussian curvature of parallels and focal surfaces}\label{sec:appendix}
Let $f\colon U\to\R^3$ be a regular surface 
and $\nu$ its unit normal vector field of $f$. 
Then the Gaussian curvature $K^t$ of the parallel surface $f^t=f+t\nu$ is 
\[
K^t=\frac{\kappa_1\kappa_2}{(1-t\kappa_1)(1-t\kappa_2)}
\]
(cf. \cite{eisen}). 
Thus when $t=1/\kappa_1(p)$ (resp. $t=1/\kappa_2(p)$), 
$K^t$ is bounded near $p$ if and only if 
$\kappa_1^{-1}(\kappa_1(p))\subset \kappa_2^{-1}(0)$ 
(resp. $\kappa_2^{-1}(\kappa_2(p))\subset \kappa_1^{-1}(0)$) (cf. \cite{suy-front,msuy}). 
Moreover, the Gaussian curvature $K^{C_i}$ ($i=1,2$) of $C_i$ can be calculated as 
\begin{equation}\label{eq:KC}
	K^{C_i}
	=\begin{dcases}
		-\frac{\kappa_1^4(\kappa_2)_{u}}{(\kappa_1)_u(\kappa_1-\kappa_2)^2} & (i=1),\\
		-\frac{\kappa_2^4(\kappa_1)_{v}}{(\kappa_2)_v(\kappa_1-\kappa_2)^2} & (i=2)
	\end{dcases}
\end{equation}
(\cite{morris,eisen}). 
Thus the Gaussian curvature $K^{C_1}$ of $C_1$ 
is bounded near a singular point $p$ 
if and only if $((\kappa_1)_u)^{-1}(0)\subset((\kappa_2)_{u})^{-1}(0)$. 
In particular, if $K^{C_1}$ is bounded and takes a non-zero value at $p$, 
then $((\kappa_1)_u)^{-1}(0)=((\kappa_2)_{u})^{-1}(0)$.

Assume that the Gaussian curvature of $f$ is a non-zero constant $c$. 
Then it holds that $\kappa_2=c/\kappa_1$. 
Thus we see that $((\kappa_2)_u)^{-1}(0)=((\kappa_1)_u)^{-1}(0)$. 
This implies that $K^{C_1}$ is bounded near the set of singular points of $C_1$. 
Moreover, by \eqref{eq:KC}, the Gaussian curvature $K^{C_1}$ of $C_1$ is given by 
\begin{equation}\label{eq:KC2}
	K^{C_1}=\frac{c\kappa_1^4}{(c-\kappa_1^2)^2}.
\end{equation}
Therefore if the surface $f$ is a constant positive (resp. negative) Gaussian curvature, 
then the Gaussian curvature $K^{C_1}$ of the caustic takes positive (resp. negative) values. 
}





%
%






\bibliographystyle{amsplain}

\bibliography{references}

@book{agv,
    AUTHOR = {Arnol'd, V. I. and Gusein-Zade, S. M. and Varchenko, A. N.},
     TITLE = {Singularities of differentiable maps. {V}olume 1},
    SERIES = {Modern Birkh\"auser Classics},
      NOTE = {Classification of critical points, caustics and wave fronts,
              Translated from the Russian by Ian Porteous based on a
              previous translation by Mark Reynolds,
              Reprint of the 1985 edition},
 PUBLISHER = {Birkh\"auser/Springer, New York},
      YEAR = {2012},
     PAGES = {xii+382},
      ISBN = {978-0-8176-8339-9},
   MRCLASS = {58Kxx},
  MRNUMBER = {2896292},
}

@article{bruce,
    AUTHOR = {Bruce, J. W.},
     TITLE = {Wavefronts and parallels in {E}uclidean space},
   JOURNAL = {Math. Proc. Cambridge Philos. Soc.},
  FJOURNAL = {Mathematical Proceedings of the Cambridge Philosophical
              Society},
    VOLUME = {93},
      YEAR = {1983},
    NUMBER = {2},
     PAGES = {323--333},
      ISSN = {0305-0041,1469-8064},
   MRCLASS = {58C27 (78A05)},
  MRNUMBER = {692002},
MRREVIEWER = {James\ Callahan},
       DOI = {10.1017/S030500410006062X},
       URL = {https://doi.org/10.1017/S030500410006062X},
}

@book {carmo,
    AUTHOR = {do Carmo, M. P.},
     TITLE = {Differential {G}eometry of {C}urves \& {S}urfaces},
   EDITION = {second},
 PUBLISHER = {Dover Publications, Inc., Mineola, NY},
      YEAR = {2016},
     PAGES = {xvi+510},
      ISBN = {978-0-486-80699-0; 0-486-80699-5},
   MRCLASS = {53-01 (53A04 53A05)},
  MRNUMBER = {3837152},
}

@Book{eisen,
  author    = {Eisenhart, L. P.},
  publisher = {Dover Publications, Inc., New York},
  title     = {A {T}reatise on the {D}ifferential {G}eometry of {C}urves and {S}urfaces},
  year      = {1960},
  mrclass   = {53.00},
  mrnumber  = {115134},
  pages     = {xiv+474},
}

@article {fukui,
    AUTHOR = {Fukui, T.},
     TITLE = {Local differential geometry of cuspidal edge and swallowtail},
   JOURNAL = {Osaka J. Math.},
  FJOURNAL = {Osaka Journal of Mathematics},
    VOLUME = {57},
      YEAR = {2020},
    NUMBER = {4},
     PAGES = {961--992},
      ISSN = {0030-6126},
   MRCLASS = {57R45 (53A05)},
  MRNUMBER = {4160343},
MRREVIEWER = {Masatomo\ Takahashi},
       URL = {https://projecteuclid.org/euclid.ojm/1602230468},
}

@article {fh-para,
    AUTHOR = {Fukui, T. and Hasegawa, M.},
     TITLE = {Singularities of parallel surfaces},
   JOURNAL = {Tohoku Math. J. (2)},
  FJOURNAL = {The Tohoku Mathematical Journal. Second Series},
    VOLUME = {64},
      YEAR = {2012},
    NUMBER = {3},
     PAGES = {387--408},
      ISSN = {0040-8735,2186-585X},
   MRCLASS = {58K05 (53A05 58K35)},
  MRNUMBER = {2979288},
MRREVIEWER = {Ronaldo\ Alves\ Garcia},
       DOI = {10.2748/tmj/1347369369},
       URL = {https://doi.org/10.2748/tmj/1347369369},
}

@book {hopf,
    AUTHOR = {Hopf, H.},
     TITLE = {Differential geometry in the large},
    SERIES = {Lecture Notes in Mathematics},
    VOLUME = {1000},
   EDITION = {Second},
      NOTE = {Notes taken by Peter Lax and John W. Gray,
              With a preface by S. S. Chern,
              With a preface by K. Voss},
 PUBLISHER = {Springer-Verlag, Berlin},
      YEAR = {1989},
     PAGES = {viii+184},
      ISBN = {3-540-51497-X},
   MRCLASS = {53-01 (01A75 53A05)},
  MRNUMBER = {1013786},
       DOI = {10.1007/3-540-39482-6},
       URL = {https://doi.org/10.1007/3-540-39482-6},
}

@book {ifrt,
    AUTHOR = {Izumiya, S. and Romero Fuster, M. C. and
              Ruas, M. A. S. and Tari, F.},
     TITLE = {Differential geometry from a singularity theory viewpoint},
 PUBLISHER = {World Scientific Publishing Co. Pte. Ltd., Hackensack, NJ},
      YEAR = {2016},
     PAGES = {xiii+368},
      ISBN = {978-981-4590-44-0},
   MRCLASS = {58Kxx (53-02)},
  MRNUMBER = {3409029},
MRREVIEWER = {Georges\ Comte},
}

@article {is-mandala,
    AUTHOR = {Izumiya, S. and Saji, K.},
     TITLE = {The mandala of {L}egendrian dualities for pseudo-spheres in
              {L}orentz-{M}inkowski space and ``flat'' spacelike surfaces},
   JOURNAL = {J. Singul.},
  FJOURNAL = {Journal of Singularities},
    VOLUME = {2},
      YEAR = {2010},
     PAGES = {92--127},
      ISSN = {1949-2006},
   MRCLASS = {53A35 (53C42 58K30)},
  MRNUMBER = {2763021},
MRREVIEWER = {Peter\ Giblin},
       DOI = {10.5427/jsing.2010.2g},
       URL = {https://doi.org/10.5427/jsing.2010.2g},
}

@article {ist-horo,
    AUTHOR = {Izumiya, S. and Saji, K. and Takahashi, M.},
     TITLE = {Horospherical flat surfaces in hyperbolic 3-space},
   JOURNAL = {J. Math. Soc. Japan},
  FJOURNAL = {Journal of the Mathematical Society of Japan},
    VOLUME = {62},
      YEAR = {2010},
    NUMBER = {3},
     PAGES = {789--849},
      ISSN = {0025-5645,1881-1167},
   MRCLASS = {53A35 (58K40)},
  MRNUMBER = {2648063},
MRREVIEWER = {Thomas\ E.\ Cecil},
       URL = {http://projecteuclid.org/euclid.jmsj/1280496820},
}

@article {krsuy,
    AUTHOR = {Kokubu, M. and Rossman, W. and Saji, K. and
              Umehara, M. and Yamada, K.},
     TITLE = {Singularities of flat fronts in hyperbolic space},
   JOURNAL = {Pacific J. Math.},
  FJOURNAL = {Pacific Journal of Mathematics},
    VOLUME = {221},
      YEAR = {2005},
    NUMBER = {2},
     PAGES = {303--351},
      ISSN = {0030-8730,1945-5844},
   MRCLASS = {53C42 (53D10 58K20 58K30)},
  MRNUMBER = {2196639},
MRREVIEWER = {Jos\'e\ Antonio\ G\'alvez},
       DOI = {10.2140/pjm.2005.221.303},
       URL = {https://doi.org/10.2140/pjm.2005.221.303},
}

@article {ms,
    AUTHOR = {Martins, L. F. and Saji, K.},
     TITLE = {Geometric invariants of cuspidal edges},
   JOURNAL = {Canad. J. Math.},
  FJOURNAL = {Canadian Journal of Mathematics. Journal Canadien de
              Math\'ematiques},
    VOLUME = {68},
      YEAR = {2016},
    NUMBER = {2},
     PAGES = {445--462},
      ISSN = {0008-414X,1496-4279},
   MRCLASS = {57R45 (53A05 53A55)},
  MRNUMBER = {3484374},
MRREVIEWER = {Masatomo\ Takahashi},
       DOI = {10.4153/CJM-2015-011-5},
       URL = {https://doi.org/10.4153/CJM-2015-011-5},
}

@incollection {msuy,
    AUTHOR = {Martins, L. F. and Saji, K. and Umehara, M. and Yamada, K.},
     TITLE = {Behavior of {G}aussian curvature and mean curvature near
              non-degenerate singular points on wave fronts},
 BOOKTITLE = {Geometry and topology of manifolds},
    SERIES = {Springer Proc. Math. Stat.},
    VOLUME = {154},
     PAGES = {247--281},
 PUBLISHER = {Springer, [Tokyo]},
      YEAR = {2016},
      ISBN = {978-4-431-56021-0; 978-4-431-56019-7},
   MRCLASS = {58K30 (58K20)},
  MRNUMBER = {3555987},
MRREVIEWER = {V.\ D.\ Sedykh},
       DOI = {10.1007/978-4-431-56021-0\_14},
       URL = {https://doi.org/10.1007/978-4-431-56021-0_14},
}

@article {tito,
    AUTHOR = {Medina-Tejeda, T. A.},
     TITLE = {Extendibility and boundedness of invariants on singularities
              of wavefronts},
   JOURNAL = {Monatsh. Math.},
  FJOURNAL = {Monatshefte f\"ur Mathematik},
    VOLUME = {203},
      YEAR = {2024},
    NUMBER = {1},
     PAGES = {199--221},
      ISSN = {0026-9255,1436-5081},
   MRCLASS = {53A05 (53A55)},
  MRNUMBER = {4688352},
MRREVIEWER = {Ra\'ul\ Oset Sinha},
       DOI = {10.1007/s00605-023-01911-5},
       URL = {https://doi.org/10.1007/s00605-023-01911-5},
}

@incollection {morris,
    AUTHOR = {Morris, R.},
     TITLE = {The sub-parabolic lines of a surface},
 BOOKTITLE = {The mathematics of surfaces, {VI} ({U}xbridge, 1994)},
    SERIES = {Inst. Math. Appl. Conf. Ser. New Ser.},
    VOLUME = {58},
     PAGES = {79--102},
 PUBLISHER = {Oxford Univ. Press, New York},
      YEAR = {1996},
      ISBN = {0-19-851198-1},
   MRCLASS = {53A05 (58C27)},
  MRNUMBER = {1430581},
MRREVIEWER = {D.\ R. J. Chillingworth},
}

@article {oset-tari,
    AUTHOR = {Oset Sinha, R. and Tari, F.},
     TITLE = {On the flat geometry of the cuspidal edge},
   JOURNAL = {Osaka J. Math.},
  FJOURNAL = {Osaka Journal of Mathematics},
    VOLUME = {55},
      YEAR = {2018},
    NUMBER = {3},
     PAGES = {393--421},
      ISSN = {0030-6126},
   MRCLASS = {57R45 (53A05)},
  MRNUMBER = {3824838},
MRREVIEWER = {Peter\ Giblin},
       URL = {https://projecteuclid.org/euclid.ojm/1530691235},
}

@article {porteous1,
    AUTHOR = {Porteous, I. R.},
     TITLE = {The normal singularities of a submanifold},
   JOURNAL = {J. Differential Geometry},
  FJOURNAL = {Journal of Differential Geometry},
    VOLUME = {5},
      YEAR = {1971},
     PAGES = {543--564},
      ISSN = {0022-040X,1945-743X},
   MRCLASS = {57D40 (53C40)},
  MRNUMBER = {292092},
MRREVIEWER = {R.\ C.\ Kirby},
       URL = {http://projecteuclid.org/euclid.jdg/1214430015},
}

@book {porteous2,
    AUTHOR = {Porteous, I. R.},
     TITLE = {Geometric differentiation},
   EDITION = {Second},
      NOTE = {For the intelligence of curves and surfaces},
 PUBLISHER = {Cambridge University Press, Cambridge},
      YEAR = {2001},
     PAGES = {xvi+333},
      ISBN = {0-521-00264-8},
   MRCLASS = {53A04 (53A05 58K05)},
  MRNUMBER = {1871900},
MRREVIEWER = {D.\ R. J. Chillingworth},
}

@article {suy-ak,
    AUTHOR = {Saji, K. and Umehara, M. and Yamada, K.},
     TITLE = {{$A_k$} singularities of wave fronts},
   JOURNAL = {Math. Proc. Cambridge Philos. Soc.},
  FJOURNAL = {Mathematical Proceedings of the Cambridge Philosophical
              Society},
    VOLUME = {146},
      YEAR = {2009},
    NUMBER = {3},
     PAGES = {731--746},
      ISSN = {0305-0041,1469-8064},
   MRCLASS = {58K20 (58K25)},
  MRNUMBER = {2496355},
MRREVIEWER = {Jos\'e\ Antonio\ G\'alvez},
       DOI = {10.1017/S0305004108001977},
       URL = {https://doi.org/10.1017/S0305004108001977},
}

@article {suy-front,
    AUTHOR = {Saji, K. and Umehara, M. and Yamada, K.},
     TITLE = {The geometry of fronts},
   JOURNAL = {Ann. of Math. (2)},
  FJOURNAL = {Annals of Mathematics. Second Series},
    VOLUME = {169},
      YEAR = {2009},
    NUMBER = {2},
     PAGES = {491--529},
      ISSN = {0003-486X,1939-8980},
   MRCLASS = {58K30 (53A05 58K20)},
  MRNUMBER = {2480610},
MRREVIEWER = {Pablo\ Mira},
       DOI = {10.4007/annals.2009.169.491},
       URL = {https://doi.org/10.4007/annals.2009.169.491},
}

@article {tera0,
    AUTHOR = {Teramoto, K.},
     TITLE = {Focal surfaces of wave fronts in the {E}uclidean 3-space},
   JOURNAL = {Glasg. Math. J.},
  FJOURNAL = {Glasgow Mathematical Journal},
    VOLUME = {61},
      YEAR = {2019},
    NUMBER = {2},
     PAGES = {425--440},
      ISSN = {0017-0895,1469-509X},
   MRCLASS = {57R45 (53A05 53A55)},
  MRNUMBER = {3928646},
MRREVIEWER = {Shy\=uichi\ Izumiya},
       DOI = {10.1017/S0017089518000277},
       URL = {https://doi.org/10.1017/S0017089518000277},
}

@article {tera1,
    AUTHOR = {Teramoto, K.},
     TITLE = {Principal curvatures and parallel surfaces of wave fronts},
   JOURNAL = {Adv. Geom.},
  FJOURNAL = {Advances in Geometry},
    VOLUME = {19},
      YEAR = {2019},
    NUMBER = {4},
     PAGES = {541--554},
      ISSN = {1615-715X,1615-7168},
   MRCLASS = {57R45 (53A05 58K05)},
  MRNUMBER = {4015189},
MRREVIEWER = {Masatomo\ Takahashi},
       DOI = {10.1515/advgeom-2018-0038},
       URL = {https://doi.org/10.1515/advgeom-2018-0038},
}

@article {tera2,
    AUTHOR = {Teramoto, K.},
     TITLE = {Focal surfaces of fronts associated to unbounded principal
              curvatures},
   JOURNAL = {Rocky Mountain J. Math.},
  FJOURNAL = {The Rocky Mountain Journal of Mathematics},
    VOLUME = {53},
      YEAR = {2023},
    NUMBER = {5},
     PAGES = {1587--1608},
      ISSN = {0035-7596,1945-3795},
   MRCLASS = {53A05 (53A55 57R45)},
  MRNUMBER = {4643821},
MRREVIEWER = {Ra\'ul\ Oset Sinha},
       DOI = {10.1216/rmj.2023.53.1587},
       URL = {https://doi.org/10.1216/rmj.2023.53.1587},
}

@article {tera3,
	AUTHOR={Teramoto, K.},
	TITLE={Geometric properties of caustics of pseudo-spherical surfaces},
	JOURNAL={Rend. Semin. Mat. Univ. Padova},
	FJOURNLA={Rendiconti del Seminario Matematico della Universit\`{a} di Padova},
	YEAR={to appear},
	DOI={10.4171/RSMUP/182},
}

@book {usy-book,
    AUTHOR = {Umehara, M. and Saji, K. and Yamada, K.},
     TITLE = {Differential geometry of curves and surfaces with
              singularities},
    SERIES = {Series in Algebraic and Differential Geometry},
    VOLUME = {1},
      NOTE = {Translated from the 2017 Japanese original by Wayne Rossman},
 PUBLISHER = {World Scientific Publishing Co. Pte. Ltd., Hackensack, NJ},
      YEAR = {2022},
     PAGES = {xvi+370},
      ISBN = {[9789811237133]; [9789811237140]; [9789811237157]},
   MRCLASS = {58Kxx (53-01 53A05)},
  MRNUMBER = {4357539},
}



\end{document}